\numberwithin{equation}{section}
\newtheorem{thm}{Theorem}
\numberwithin{thm}{section}
\newtheorem{prop}[thm]{Proposition}
\newtheorem{lem}[thm]{Lemma}
\newtheorem{cor}[thm]{Corollary}
\theoremstyle{remark}
\newtheorem{rem}[thm]{Remark}
\theoremstyle{definition}
\newtheorem{eg}[thm]{Example}
\newcommand{\1}{\mathbf{1}}
\newcommand{\C}{\mathbf{C}}
\newcommand{\cali}[1]{\mathcal{#1}}
\newcommand{\chx}[1]{\langle #1\rangle}
\newcommand{\cl}{\mathrm{Cl}}
\newcommand{\comment}[1]{}
\newcommand{\F}{\mathbf{F}}
\newcommand{\gal}{\mathrm{Gal}}
\newcommand{\got}[1]{\mathfrak{#1}}
\newcommand{\plim}{\varprojlim}
\newcommand{\Q}{\mathbf{Q}}
\newcommand{\R}{\mathbf{R}}
\newcommand{\tr}{\mathrm{Tr}}
\newcommand{\Z}{\mathbf{Z}}
\newcommand{\nm}{\mathrm{Nm}}
\newcommand{\ord}{\mathrm{ord}}
\title{On a product formula of bivariate $p$-adic Gamma values}
\author{Luochen Zhao}
\date{Nov 8, 2023}
\subjclass[2020]{11S40 (primary); 11S80, 11Y35 (secondary).}
\keywords{Ferrero--Greenberg type derivative formulas, multivariate $p$-adic Gamma functions, genus $L$-functions, Davenport--Hasse relations}
\address{Einstein Institute of Mathematics, Edmond J. Safra Campus, The Hebrew University of Jerusalem, Givat Ram, Jerusalem, 9190401, Israel}
\email{zhao.luochen@mail.huji.ac.il}
\begin{document}
	\maketitle
	
	\begin{abstract}
		We establish a derivative formula of $p$-adic Shintani $L$-functions, thus those of totally real $p$-adic Hecke $L$-functions with trivial moduli. As an application, we present a product formula of bivariate $p$-adic Gamma values by specializing to genus $L$-functions, which takes a particularly simple form when the field is $\Q(\sqrt{3})$. In the appendix we explain how the product formula is supposed to encode a higher analytic analogue of the Davenport-Hasse relation.
	\end{abstract}
	
	\setcounter{tocdepth}{1}
	\tableofcontents
	
	\section{Introduction}
	
	Let $p$ be a prime number and $F$ be a totally real field with ring of integers $\cali{O}$. Let $\chi$ be a finite Hecke character over $F$, so there exists an integral ideal $\cali{I}$ for which $\chi$ can be regarded as a character on the narrow ray class group of modulus $\cali{I}$: $\chi: \cl_+(\cali{I}) \to \bar{\Q}^\times$.
	
	We call the ideal $\cali{I}$ Cassou-Nogu\`es \cite{CN79}, if $\cali{O}/\cali{I}$ is a cyclic group. In \cite[\S5]{Zh23}, under the premise $\cali{I}\ne \cali{O}$ together with some mild assumptions, we obtained a derivative formula of the $p$-adic Hecke $L$-function of $\chi$ in the spirit of Ferrero--Greenberg \cite{FG78}. In the present paper, we shall complement the aforementioned result by providing a derivative formula when $\cali{I}=\cali{O}$. To state it, we fix some notation first: Let $k$ be the degree of $F/\Q$. Denote by $\omega$ the Teichm\"uller character, $\omega_F$ the character $\omega\circ\nm_{F/\Q}$ on $\cl_+(p)$, and $\chx{\cdot}$ the character $x/\omega(x)$ on $\Z_p^\times$. Let $L_{F,p}(s,\chi\omega_F)$ be the $p$-adic meromorphic function on $\{s\in \C_p:|s|\le 1\}$ determined by the interpolation property
	\begin{align*}
		L_{F,p}(1-m,\chi\omega_F) = \prod_{\got{p}|p}(1-\chi\omega_F^{1-m}(\got{p})\nm(\got{p})^{m-1})L_F(1-m,\chi\omega_F^{1-m})
	\end{align*}
	for $m\in \Z_{\ge 1}$, where for a Hecke character $\psi$ of $F$, $L_F(s,\psi)$ denotes the complex $L$-function $\sum_{0\ne \got{a}\subseteq \cali{O}}\psi(\got{a})\nm\got{a}^{-s}$. Choose once and for all a set of representatives $\{\got{a}_i\}_{1\le i\le h}$ of $\cl_+(F)$. If $S\subseteq F$, denote by $S_+$ the subset of totally positive elements of $S$. As in \cite{Sh76}, we fix throughout a Shintani cone decomposition
	\begin{align}\label{eq:shintani}
		(F\otimes \R)_+/\cali{O}^\times_+ = \bigsqcup_V \overline{C}(V),
	\end{align}
	where each $V=\{v_1,v_2,\cdots,v_k\}\subset F_+$ spans a $k$-dimensional cone $C(V) = \sum_{1\le i\le k} \R_{>0}v_i$ and $\overline{C}(V)$ is the upper closure of $C(V)$ after Yamamoto \cite{Ya10}. Write $P(V)$ for the fundamental parallelotope of $\overline{C}(V)$, so we have $\overline{C}(V) = P(V) + \sum_{1\le i\le k}\Z_{\ge 0}v_i$. Finally, if $U=\{u_1,\cdots,u_k\}\subset \cali{O}_p$, define the multivariate $p$-adic Gamma function as
	\begin{align*}
		\Gamma_{p,U}(y_1,\cdots,y_k)=\Gamma_{p,U}(y_1u_1+\cdots+y_ku_k) = \lim_{\substack{n_1,\cdots,n_k>0\\ n_1\to y_1,\cdots,n_k\to y_k}} \prod_{\substack{1\le l_1<n_1,\cdots,1\le l_k<n_k\\ \gcd(p,l_1u_1+\cdots +l_ku_k)=1}}\chx{\nm(l_1u_1+\cdots +l_ku_k)}.
	\end{align*}
	
	Our first result can be stated as follows:
	\begin{thm}\label{thm.A}
		Let $\chi$ be a Hecke character of the narrow class group $\cl_+(F)$ and $\cali{N}$ be an auxiliary integral ideal of $F$. Suppose the following are valid:
		\begin{enumerate}
			\item[(A1)] $p$ is inert in $F$.
			\item[(A2)] $\cali{N}$ is Cassou-Nogu\`es, i.e., there is a group isomorphism $\cali{O}/\cali{N}\simeq \Z/N$ with $N = \#(\cali{O}/\cali{N})$, $\cali{N}\ne \cali{O}$ and $\cali{N}$ is prime to $p$.
			\item[(A3)] Each ideal representative $\got{a}_i$ is integral and prime to $\cali{N}p$.
			\item[(A4)] Any $V$ in the Shintani cone decomposition \eqref{eq:shintani} is contained in $\cali{O}$, and is a basis of $\cali{O}_p=\plim_n\cali{O}/p^n$. Furthermore, for all $v\in V$, $v$ is prime to $\cali{N}$.
		\end{enumerate}
		Then, we have the derivative formula
		\begin{align*}
			(1-\chi(\cali{N})N)L'_{F,p}(0,\chi\omega_F) = (-1)^k\sum_{1\le i\le h}\sum_V\sum_{x\in \got{a}_i^{-1}\cap P(V)}\chi(\got{a}_i)\sum_{0\le d<N}\got{h}_\emptyset(x+d\cdot v)\log_p\Gamma_{p,V}\left(\frac{x+d\cdot v}{N}\right),
		\end{align*}
		where $\got{h}_\emptyset(a) = -1 + N\1_{a\equiv 0\bmod \cali{N}}$.
	\end{thm}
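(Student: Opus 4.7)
The proof adapts the Ferrero--Greenberg-type derivative technique of \cite[\S5]{Zh23}, with $\cali{N}$ now playing the role of the Cassou-Nogu\`es modulus. The crucial preliminary observation is that under (A1) the unique prime $\got{p} = p\cali{O}$ above $p$ is principally generated by the totally positive rational prime $p$, so $\chi(\got{p}) = 1$, and the interpolation at $m=1$ forces the exceptional vanishing
\begin{equation*}
L_{F,p}(0, \chi\omega_F) = (1 - \chi(\got{p})) L_F(0, \chi) = 0.
\end{equation*}
Consequently, introducing the $\cali{N}$-regularized meromorphic function
\begin{equation*}
\Lambda(s) := (1 - \chi(\cali{N}) N \chx{N}^{-s}) L_{F,p}(s, \chi\omega_F),
\end{equation*}
the LHS of Theorem \ref{thm.A} becomes simply $\Lambda'(0) = (1 - \chi(\cali{N}) N)L'_{F,p}(0, \chi\omega_F)$.

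The next step is to decompose $\Lambda(s)$ as a sum of Cassou-Nogu\`es' auxiliary regularized partial zeta functions, $\Lambda(s) = \sum_i \chi(\got{a}_i) \hat{\zeta}_p(s, \got{a}_i^{-1}, \cali{N})$, where (A3) ensures admissibility of the chosen representatives. Each $\hat{\zeta}_p$ is expanded via the Shintani decomposition \eqref{eq:shintani}: under (A4), each $V$ is a $\Z_p$-basis of $\cali{O}_p$, and the coprimality of $v \in V$ with $\cali{N}$ guarantees that the orbit $\{x+dv\}_{0 \le d < N}$ traverses $\cali{O}/\cali{N}$ exactly once. The depleted measure underlying the regularization, formally $d\mu_{\cali{N}, \got{a}_i} - \nm(\cali{N})^{1-s}\, d\mu_{\cali{N}, \cali{N}\got{a}_i}$, then distributes precisely as the weight $\got{h}_\emptyset(x+dv) = -1 + N \1_{x+dv \equiv 0 \bmod \cali{N}}$ across the orbit.

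Finally, the $s$-derivative at $0$ is computed by the Ferrero--Greenberg method: for each cone $V$ and orbit element, $\partial_s \chx{\nm(t)}^{-s}|_{s=0} = -\log_p \chx{\nm(t)}$, and summing over the lattice points defining the multivariate $p$-adic Gamma reassembles $\log_p \Gamma_{p, V}\!\left(\frac{x+dv}{N}\right)$ at the rational argument dictated by the orbit. The sign $(-1)^k$ arises as a combinatorial artifact of the $k$-dimensional cone summation in the parallelotope $P(V)$, completing the formula of Theorem \ref{thm.A}.

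The main obstacle is the middle paragraph: rigorously establishing that $\Lambda(s)$ unfolds into the claimed Shintani sum with exactly the $\got{h}_\emptyset$ weighting. This requires a careful matching between the Cassou-Nogu\`es measure-theoretic regularization and Yamamoto's upper-closure Shintani combinatorics; coordinating the orbit structure $\{x + dv\}_{0 \le d < N}$ with the geometric decomposition into cones---and verifying that the $\cali{N}$-depletion respects the cone-by-cone bookkeeping---is the technical heart of the argument.
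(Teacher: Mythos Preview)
Your outline has the right overall architecture but contains two genuine gaps, both of which you have underestimated.

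First, the measure attached to the Cassou--Nogu\`es regularization does \emph{not} directly produce the weight $\got{h}_\emptyset$. By the explicit period formula (\cite[Theorem 3.6]{Zh23}), the measure values on basic opens are $(-1)^k\got{h}_V(x+l\cdot v)$, where $\got{h}_V$ is the much more complicated function
\[
\got{h}_V(a)=\frac{1}{N^{k-1}}\sum_{\substack{0\le d_1,\dots,d_k<N\\ d\cdot v\equiv -a\bmod\cali{N}}}d_1\cdots d_k-\left(\frac{N-1}{2}\right)^k.
\]
Passing from $\got{h}_V$ to $\got{h}_\emptyset$ requires a separate combinatorial identity (Lemma~\ref{lem:h-identity} and its corollaries): one shows $\got{h}_U(a+u_i)=\got{h}_U(a)+\got{h}_{U\setminus\{u_i\}}(a)$, iterates, and then argues that in the Riemann sum limit all the ``partial'' terms $\got{h}_{V\setminus\{v_j\}_{j\in S}}$ with $S\subsetneq\{1,\dots,k\}$ vanish by continuity, leaving only the $S=\{1,\dots,k\}$ term, which is $\got{h}_\emptyset$. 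Your claim that the depleted measure ``distributes precisely as the weight $\got{h}_\emptyset$'' skips this entire reduction, and it is not a formality.

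Second, the vanishing $L_{F,p}(0,\chi\omega_F)=0$ that you deduce from the Euler factor is correct but insufficient. When you differentiate the Shintani decomposition
\[
(1-\chi\omega_F(\cali{N})\chx{N}^{1-s})L_{F,p}(s,\chi\omega_F)=-\sum_i\chi(\got{a}_i)\chx{\nm(\got{a}_i)}^{-s}\sum_V\sum_{x}L_{p,V,x,\cali{N}}(s),
\]
the product rule leaves cross terms $\sum_i\chi(\got{a}_i)\log_p\nm(\got{a}_i)\sum_{V,x}L_{p,V,x,\cali{N}}(0)$, and the Ferrero--Greenberg manipulation on each $L_{p,V,x,\cali{N}}$ produces an additional $-k\log_p N\cdot L_{p,V,x,\cali{N}}(0)$. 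To kill the first of these you need the \emph{per-class} vanishing $\sum_{x\in\got{a}_i^{-1}\cap P(V)}L_{p,V,x,\cali{N}}(0)=0$ for each $i$ separately, not just the $\chi$-weighted total. The paper obtains this via an interpolation identity (Lemma~\ref{lem:interpolation-zero}) relating $L_{p,V,x,\cali{N}}(0)$ to complex Shintani values at $x$ and $\tau_p^{-1}x$, so that summing over $x$ telescopes; this is precisely where (A1) is used in a nontrivial way, beyond the Euler-factor observation.
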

	
	Specializing this to the case of genus $L$-functions and exploiting their factorizations, we shall find a product formula of bivariate $p$-adic Gamma values; see Proposition \ref{prop.prod-formula}. When $F=\Q(\sqrt{3})$, the product formula takes a particularly simple form:
	\begin{cor}\label{cor:B}
		Suppose $p$ is inert in $\Q(\sqrt{3})$, i.e., $p\equiv 5,7\bmod 12$. Put $\varepsilon = 2+\sqrt{3}$ and $V = \{1,\varepsilon\}$. Let $\Gamma_p(x)$ be Morita's $p$-adic Gamma function. Then, for any odd integer $N>1$ prime to $p$, not divisible by $9$, and all of whose prime divisors split or ramify in $\Q(\sqrt{3})$, choose an integral ideal $\cali{N}$ with $\Z[\sqrt{3}]/\cali{N}=\Z/N$. Denote by $\chi_{-4}$ the quadratic character of $\Q(i)$. We have
		\begin{align}\label{eq:product-formula}
			\frac{\prod_{0\le a,b<N}\Gamma_{p,V}\left(\frac{\frac{1+\varepsilon}{2}+a+b\varepsilon}{N}\right)}{\prod_{a+b\varepsilon\equiv -(1+\varepsilon)/2 \bmod \cali{N}} \Gamma_{p,V}\left(\frac{\frac{1+\varepsilon}{2}+a+b\varepsilon}{N}\right)^N}
			=
			\begin{cases}
				\chx{\Gamma_p(1/4)}^{4(1-\chi_{-4}(N)N)/3}&\text{if }p\equiv 5\bmod 12;\\
				\chx{\Gamma_p(1/3)}^{2(1-\chi_{-4}(N)N)}&\text{if }p\equiv 7\bmod 12.
			\end{cases}
		\end{align}
	\end{cor}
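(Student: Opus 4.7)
The plan is to apply Proposition \ref{prop.prod-formula} to $F = \Q(\sqrt{3})$ and exploit that $h(F) = 1$ while $h^+(F) = 2$ (the latter because $\nm(\varepsilon) = +1$). A set of narrow class representatives is then $\{\got{a}_1, \got{a}_2\} = \{\cali{O}, (1+\sqrt{3})\}$, the nontriviality of $[(1+\sqrt{3})] \in \cl_+(F)$ being forced by $\nm(1+\sqrt{3}) = -2$. The single-cone Shintani decomposition uses $V = \{1,\varepsilon\}$, and a direct calculation gives $\cali{O} \cap P(V) = \{0\}$ while $(1+\sqrt{3})^{-1} \cap P(V) = \{0,\, (1+\varepsilon)/2\}$. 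Under the hypotheses on $p$ and $N$, assumptions (A1)--(A4) are verifiable, and a suitable Cassou--Nogu\`es $\cali{N}$ can be built prime-by-prime (the condition $9 \nmid N$ is exactly what keeps $\cali{O}/\cali{N}$ cyclic at the ramified prime over $3$).

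First I apply Theorem \ref{thm.A} with $\chi = \chi_0$, the unique nontrivial (genus) character of $\cl_+(F)$, which satisfies $\chi_0(\cali{O}) = +1$ and $\chi_0((1+\sqrt{3})) = -1$. Since $(-1)^k = 1$ for $k = 2$ and the $x = 0$ terms from $\got{a}_1^{-1} \cap P(V)$ and $\got{a}_2^{-1} \cap P(V)$ contribute identical sums, those $x = 0$ pieces cancel by virtue of $\chi_0(\cali{O}) + \chi_0((1+\sqrt{3})) = 0$, leaving only the $x = (1+\varepsilon)/2$ contribution from $\got{a}_2$:
\begin{align*}
	(1 - \chi_0(\cali{N}) N)\, L'_{F,p}(0, \chi_0 \omega_F) = -\!\!\sum_{0 \le a,b < N}\!\! \got{h}_\emptyset\bigl(\tfrac{1+\varepsilon}{2} + a + b\varepsilon\bigr)\, \log_p \Gamma_{p,V}\bigl(\tfrac{(1+\varepsilon)/2 + a + b\varepsilon}{N}\bigr).
\end{align*}
Expanding $\got{h}_\emptyset(a) = -1 + N\cdot\1_{a \equiv 0 \bmod \cali{N}}$ and using $\log_p \chx{x} = \log_p x$, the right-hand side is exactly $\log_p$ of the left side of \eqref{eq:product-formula}.

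Next I compute the two remaining ingredients. Classical genus theory identifies $\chi_0$ with the factorization $12 = (-4)(-3)$, giving $\chi_0(\got{b}) = \chi_{-4}(\nm \got{b})$ and hence $\chi_0(\cali{N}) = \chi_{-4}(N)$. For the derivative, I use the genus factorization of $p$-adic $L$-functions
\[
	L_{F,p}(s, \chi_0 \omega_F) = L_{\Q,p}(s, \chi_{-4}\omega) \cdot L_{\Q,p}(s, \chi_{-3}\omega),
\]
which follows by checking Euler factors prime-by-prime and extending the classical factorization $L_F(s,\chi_0\psi) = L_\Q(s,\chi_{-4}\psi)\,L_\Q(s,\chi_{-3}\psi)$ (for $\psi$ coming from a Dirichlet character via the norm) to the $p$-adic setting by interpolation. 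The interpolation formula yields $L_{\Q,p}(0, \chi_{-4}\omega) = (1-\chi_{-4}(p))/2$ and $L_{\Q,p}(0, \chi_{-3}\omega) = (1-\chi_{-3}(p))/3$; since $\chi_{-4}(p)\chi_{-3}(p) = -1$ when $p$ is inert in $F$, exactly one of these vanishes (the $\chi_{-4}$-factor when $p \equiv 5\bmod 12$ and the $\chi_{-3}$-factor when $p \equiv 7\bmod 12$). Applying the classical Ferrero--Greenberg formula to the surviving derivative, together with Morita's reflection $\Gamma_p(x)\Gamma_p(1-x) = \pm 1$, collapses it to $\pm 2\log_p \Gamma_p(1/4)$ respectively $\pm 2\log_p \Gamma_p(1/3)$; combined with the surviving nonzero value ($2/3$ or $1$) and with the factor $(1 - \chi_{-4}(N)N)$ this produces the exponents $4/3$ and $2$ in the two cases of \eqref{eq:product-formula}.

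The main obstacle I anticipate is sign bookkeeping: one must carefully align the conventions in Theorem \ref{thm.A}, in the Ferrero--Greenberg formula for $L'_{\Q,p}(0,\chi\omega)$, in Morita's reflection, and in the extension of the genus factorization to the $p$-adic setting, so that the exponent on the right of \eqref{eq:product-formula} comes out with the stated sign. A secondary technicality is verifying the genus factorization at the ramified primes $2, 3$, which feeds into justifying $\chi_0(\cali{N}) = \chi_{-4}(N)$ in the case $3 \mid N$ and into matching Euler factors in the $p$-adic factorization.
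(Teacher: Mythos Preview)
Your approach is essentially the same as the paper's: apply Theorem~\ref{thm.A} (which the paper packages together with the genus factorization and Ferrero--Greenberg as Proposition~\ref{prop.prod-formula}), cancel the common lattice-point contributions via $\chi(\got{a}_1)+\chi(\got{a}_2)=0$, then evaluate the right side using the $p$-adic factorization $L_{F,p}(s,\chi_0\omega_F)=L_p(s,\chi_{-4}\omega)L_p(s,\chi_{-3}\omega)$ and Morita's reflection. One small correction: $0\notin P(V)$ since $P(V)\subset (F\otimes\R)_+$; with Yamamoto's upper-closure convention the paper finds $\cali{O}\cap P(V)=\{1\}$ and $(\sqrt{3}-1)^{-1}\cap P(V)=\{1,(1+\varepsilon)/2\}$, but this does not affect your cancellation argument.
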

	\begin{rem}\label{rem:Gross-Stark}
		Since equation \eqref{eq:product-formula} might be regarded as an extension of the $p$-adic Chowla--Selberg formula \cite[(4.12)]{GK79}, one is curious about its arithmetic meaning. Put $F=\Q(\sqrt{3})$. Denote by $H/F$ (resp.~$H_0/F$) the ray class field corresponding to $\cl_+(\cali{N})/(p)^{\Z}$ (resp.~$\cl_+(F)$). Write $\bar{\cali{N}}$ for the Galois conjugate of $\cali{N}$. Suppose $\cali{N}$ is a prime ideal, $\cl_+(\cali{N})\ne \cl_+(F)$, and $H$ is a CM field. Then, up to $\mu_{p-1}p^{\Z}\subset \Z_p^\times$, by \cite[Corollary 1.10]{Zh23}, the left hand side of \eqref{eq:product-formula} can be recast as (see \S\ref{subsec:GS})
		\begin{align}\label{eq:norm}
			\nm_{\Q(\sqrt{3})\otimes \Q_p/\Q_p}\left(\frac{\prod_{\tau\in\gal(H/F),\tau|_{H_0}\ne 1} u_{\cali{N}}^\tau}{\prod_{\sigma\in\gal(H/F),\sigma|_{H_0}=1}u_{\cali{N}}^\sigma}\right)
			\prod_{\substack{y\in (\sqrt{3}-1)^{-1}\bar{\cali{N}}^{-1}\cap P(V)\\ y\notin \bar{\cali{N}}^{-1}\cap P(V)}} \Gamma_{p,V}(y)^{1-N},
		\end{align}
		where $u_{\cali{N}}$ is the Gross--Stark unit of $H/F$ for some fixed prime $\got{P}$ of $H$, known to exist by \cite{DDP,Ve15} and \cite[Proposition 3.8]{Gr81}. The product
		\begin{align*}
			\prod_{\substack{y\in (\sqrt{3}-1)^{-1}\bar{\cali{N}}^{-1}\cap P(V)\\ y\notin \bar{\cali{N}}^{-1}\cap P(V)}} \Gamma_{p,V}(y)
		\end{align*}
		is therefore also algebraic since $\Gamma_p(1/4),\Gamma_p(1/3)$ are, but does not have a direct arithmetic interpretation.
	\end{rem}
	
	\subsection{Organization of this paper} In \S\ref{sec:sumexpr}, we study some combinatorial properties of the periodic functions appearing in the explicit formulas of the measures attached to $p$-adic Shintani $L$-functions; this allows us to present a different form of sum expression than \cite[Corollary 3.8]{Zh23}. In \S\ref{sec:FG}, we explain how the computations performed in \S5 and Appendix A, \textit{op.~cit.}, together with the new sum expression, yield a proof of Theorem \ref{thm.A}. Finally, we apply this derivative formula to genus $L$-functions in \S\ref{sec:gamma-product} to establish the product formula \eqref{eq:product-formula}; we shall also numerically verify it for $p=5,7$ up to some $p$-adic precision. In the appendix, we consider two instances where an ``independent of $N$'' formula, much like \eqref{eq:product-formula}, results from the Euler regularization process, and we show both are arithmetic in nature. The second one concerning Morita Gamma values corresponds to the classical Davenport--Hasse relation \cite{DH35}.
	
	\subsection{Acknowledgment} I am thankful to Antonio Lei for his encouragement during the writing of this article. I am also grateful to him, Alan Lauder and Ari Shnidman for helpful comments. Part of this work is supported under the European Research Council (ERC, CurveArithmetic, 101078157).
	
	\section{Sum expression revisited}
	\label{sec:sumexpr}
	We will borrow the notation from \cite[\S2]{Zh23}. Below fix a large enough coefficient ring $R$ over $\Z_p$. If $h>1$ is an integer and $a\in \Z/h$, we denote by $a^\flat_h$ (resp.~$a^\sharp_h$) the unique integer in $[0,h)$ (resp.~$(0,h]$) that is congruent to $a$ modulo $h$. For the rest of this article, choose a Cassou-Nogu\`es ideal $\cali{N}\ne \cali{O}$ that is prime to $p$; write $N=\#\cali{O}/\cali{N}$. Fix a power $q>1$ of $p$ such that $q\equiv 1\bmod N$. Let $x\in F$ be $\cali{N}p$-integral and $V = \{v_1,\cdots,v_k\}$ be a subset of $\cali{O}$ that spans $\cali{O}_p:=\cali{O}\otimes \Z_p$; write $x = \sum_{1\le i\le k}x_iv_i$ for some $x_i\in \Q\cap \Z_p$. We will assume that $\gcd(v_i,\cali{N}) =1$ for all $i$. Let $\mu_{V,x,\cali{N}}$ be the $p$-adic measure on $\cali{O}_p$ corresponding to the rational function
	\begin{align*}
		f_{V,x,\cali{N}}(t_1,\cdots,t_k) = \sum_{\xi} \xi(x)\prod_{1\le i\le k}\frac{t_i^{x_i}}{1-\xi(v_i)t_i} \in R[[t_1-1,\cdots,t_k-1]]
	\end{align*}
	via the $p$-adic Fourier transform of Amice and Mazur, where the sum is over all additive characters $\xi:\cali{O}/\cali{N}\to \bar{\Q}^\times$ with $\xi\ne 1$. We refer the reader to \cite[\S3.8]{Hida} for details, as well as its relation with the measure attached to the $p$-adic $L$-functions regularized at $\cali{N}$. Now, we put
	\begin{align}\label{eq:partial-integral-representation}
		L_{p,V,x,\cali{N}}(s,\omega_F) = \int_{\cali{O}_p^\times}\chx{\nm (\alpha)}^{-s}\mu_{V,x,\cali{N}}(\alpha).
	\end{align}
	By the explicit period formula \cite[Theorem 3.6]{Zh23}, we have, for $n\in \Z_{\ge 0}$ and $0\le l_1,\cdots,l_k<q^n$,
	\begin{align*}
		\mu_{V,x,\cali{N}}(x+l\cdot v + q^n \cali{O}_p) = (-1)^k\got{h}_V(x+l\cdot v).
	\end{align*}
	Here, for a subset $U = \{u_1,\cdots,u_r\}\ne \emptyset$ of $\cali{O}$, $\got{h}_U$ stands for the periodic function on $\cali{O}/\cali{N}$:
	\begin{align*}
		\got{h}_U(a) = \frac{1}{N^{r-1}}\sum_{\substack{0\le d_1,\cdots,d_r<N\\ d_1u_1+\cdots+d_ru_r\equiv -a\bmod \cali{N}}}
		d_1d_2\cdots d_r - \left(\frac{N-1}{2}\right)^r.
	\end{align*}
	Taking the Riemann sum of \eqref{eq:partial-integral-representation}, we find
	\begin{align}\label{eq:riemann-sum}
		L_{p,V,x,\cali{N}}(s,\omega_F) = (-1)^k\lim_{n\to \infty} \sum_{\substack{0\le l<q^n\\ \gcd(p,x+l\cdot v)=1}} 
		\got{h}_V(x+l\cdot v)\chx{\nm(x+l\cdot v)}^{-s}.
	\end{align}
	
	In order to state the lemma below, we define
	\begin{align*}
		\got{h}_\emptyset(a) = \begin{cases}
			-1 & \text{if }a\not\equiv 0 \bmod \cali{N};\\
			N-1 & \text{if }a\equiv 0 \bmod \cali{N}.
		\end{cases}
	\end{align*}
	\begin{lem}\label{lem:h-identity}
		Let $U=\{u_1,\cdots,u_r\}\subset \cali{O}$ be nonempty, and suppose for all $1\le j\le r$, $\gcd(u_j,\cali{N})=1$. Then for $1\le i\le r$ and $a\in \cali{O}/\cali{N}$, we have
		\begin{align*}
			\got{h}_U(a+u_i) = \got{h}_U(a) + \got{h}_{U\setminus\{u_i\}}(a).
		\end{align*}
	\end{lem}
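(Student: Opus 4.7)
The plan is a direct combinatorial computation, isolating the coordinate corresponding to $u_i$. After relabeling we may take $i = r$. I would introduce the partial sum
\begin{align*}
T(b) := \sum_{\substack{0 \le d_1, \ldots, d_{r-1} < N \\ \sum_{j<r} d_j u_j \equiv -b \bmod \cali{N}}} d_1 d_2 \cdots d_{r-1},
\end{align*}
with the convention that for $r = 1$ the empty product collapses to $T(b) = \1_{b \equiv 0 \bmod \cali{N}}$. The definition of $\got{h}_U$ can then be reorganized as
\begin{align*}
\got{h}_U(a) + \left(\frac{N-1}{2}\right)^{r} = \frac{1}{N^{r-1}} \sum_{d_r = 0}^{N-1} d_r\, T(a + d_r u_r).
\end{align*}

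The key step is to compute $\got{h}_U(a + u_r)$ through the substitution $e = d_r + 1$, so that $e$ now runs over $1, \ldots, N$. Because $\cali{O}/\cali{N} \simeq \Z/N$ is annihilated by $N$, we have $T(a + N u_r) = T(a)$, which allows us to relabel the $e = N$ term as an $e = 0$ contribution. A short rearrangement gives
\begin{align*}
N^{r-1}\bigl(\got{h}_U(a+u_r) - \got{h}_U(a)\bigr) = N\, T(a) - \sum_{e=0}^{N-1} T(a + e u_r).
\end{align*}
Since $\gcd(u_r, \cali{N}) = 1$, the map $e \mapsto a + e u_r$ permutes $\cali{O}/\cali{N}$, so the last sum is independent of $a$ and equals $\sum_{b \bmod \cali{N}} T(b) = (N(N-1)/2)^{r-1}$. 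Dividing by $N^{r-1}$ yields
\begin{align*}
\got{h}_U(a + u_r) - \got{h}_U(a) = \frac{T(a)}{N^{r-2}} - \left(\frac{N-1}{2}\right)^{r-1},
\end{align*}
which is precisely $\got{h}_{U \setminus \{u_r\}}(a)$ when $r \ge 2$, and which specializes to $N\, \1_{a \equiv 0 \bmod \cali{N}} - 1 = \got{h}_\emptyset(a)$ when $r = 1$, matching the definition introduced just before the statement of the lemma.

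No step is conceptually deep; the only place to tread carefully is the boundary of the re-indexing, where the identification of the $e = N$ term with $e = 0$ via $N u_r \equiv 0 \bmod \cali{N}$ is exactly what produces the ``extra'' term $N\, T(a)$, and hence it is exactly what turns the naive difference (which would otherwise be a constant) into the honest value $\got{h}_{U \setminus \{u_r\}}(a)$.
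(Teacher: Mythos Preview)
Your proof is correct and follows essentially the same approach as the paper: both isolate the $u_r$-coordinate, perform the shift $d_r \mapsto d_r \pm 1$, and identify the wraparound term using that $u_r$ generates $\cali{O}/\cali{N}$; your $T(b)$ is exactly the paper's $H_{U\setminus\{u_r\}}(b)$. A minor bonus of your presentation is that the convention $T(b)=\1_{b\equiv 0}$ for $r=1$ lets you treat that case uniformly, whereas the paper simply remarks that $r=1$ ``can be verified separately.''
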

	\begin{proof}
		For $\emptyset\ne U' = \{u'_1,\cdots,u'_s\}\subset \cali{O}$ we put
		\begin{align*}
			H_{U'}(a) = \sum_{0\le d_1,\cdots,d_s<N, d\cdot u'\equiv -a\bmod \cali{N}}
			d_1\cdots d_s,
		\end{align*}
		so $\got{h}_{U'} = H_{U'}/N^{s-1} - (N-1)^s/2^s$. Without loss of generality assume $i=r$. When $r\ge 2$, we have
		\begin{align*}
			H_U(a+u_r) &=\sum_{0\le d<N, d\cdot u\equiv -a-u_r\bmod \cali{N}} d_1\cdots d_r\\
			&=\sum_{0\le d<N, d\cdot u\equiv -a\bmod \cali{N}} d_1\cdots d_{r-1}(d_r-1)^\flat_N\\
			&=\sum_{0\le d<N, d\cdot u\equiv -a\bmod \cali{N}} d_1\cdots d_{r-1}(d_r-1+N\1_{d_r=0})\\
			&=H_U(a) - \left(\frac{N(N-1)}{2}\right)^{r-1} + NH_{U\setminus\{u_r\}}(a).
		\end{align*}
		Here in the last equality, to get the term $\left(\frac{N(N-1)}{2}\right)^{r-1}$, we used the fact that the cyclic group $\cali{O}/\cali{N}$ is generated by $u_r$. Now, divide both sides by $N^{r-1}$, and we get
		\begin{align*}
			\got{h}_U(a+u_r) = \got{h}_U(a) - \left(\frac{N-1}{2}\right)^{r-1} + \got{h}_{U\setminus\{u_r\}}(a) + \left(\frac{N-1}{2}\right)^{r-1} = \got{h}_U(a) + \got{h}_{U\setminus\{u_r\}}(a).
		\end{align*}
		The case of $k=1$ can be verified separately.
	\end{proof}
	
	\begin{cor}
		Notation as in Lemma \ref{lem:h-identity}. For $x\in \cali{O}/\cali{N}$ and $l_1,\cdots,l_r\ge 0$, we have
		\begin{align*}
			\got{h}_U(x+l\cdot u) = \got{h}_U(x) + \sum_{\emptyset\ne S\subseteq \{1,2,\cdots,r\}} \sum_{\substack{0\le d_j<l_j\\ j\in S}} \got{h}_{U\backslash\{u_j\}_{j\in S}}\Bigg(x+\sum_{j\in S}d_ju_j\Bigg).
		\end{align*}
	\end{cor}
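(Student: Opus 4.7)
The plan is to proceed by induction on $r$, with Lemma \ref{lem:h-identity} serving as the single-step telescoping identity. To make the induction self-contained, I would first reformulate the claim to allow the ambient set to strictly contain the distinguished elements $\{u_1, \ldots, u_r\}$: for any nonempty $U' \subseteq \cali{O}$ with all elements prime to $\cali{N}$, any distinct $u_1, \ldots, u_r \in U'$, and any $l_1, \ldots, l_r \ge 0$,
\begin{equation*}
\got{h}_{U'}(x + l \cdot u) = \got{h}_{U'}(x) + \sum_{\emptyset \ne S \subseteq \{1, \ldots, r\}} \sum_{\substack{0 \le d_j < l_j \\ j \in S}} \got{h}_{U' \setminus \{u_j\}_{j \in S}}\Bigl(x + \sum_{j \in S} d_j u_j\Bigr).
\end{equation*}

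The base case $r = 1$ is a direct telescoping: iterating Lemma \ref{lem:h-identity} along $u_1$ gives
\begin{equation*}
\got{h}_{U'}(x + l_1 u_1) - \got{h}_{U'}(x) = \sum_{d_1 = 0}^{l_1 - 1} \bigl(\got{h}_{U'}(x + (d_1 + 1) u_1) - \got{h}_{U'}(x + d_1 u_1)\bigr) = \sum_{d_1 = 0}^{l_1 - 1} \got{h}_{U' \setminus \{u_1\}}(x + d_1 u_1),
\end{equation*}
which matches the $r = 1$ instance of the formula.

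For the inductive step I would apply this same single-variable telescoping in the $u_r$-direction, producing the split
\begin{equation*}
\got{h}_{U'}(x + l \cdot u) = \got{h}_{U'}\bigl(x + \textstyle\sum_{i < r} l_i u_i\bigr) + \sum_{d_r = 0}^{l_r - 1} \got{h}_{U' \setminus \{u_r\}}\bigl(x + \textstyle\sum_{i < r} l_i u_i + d_r u_r\bigr),
\end{equation*}
and then invoke the inductive hypothesis with $r - 1$ telescoping coordinates on each piece: once with ambient set $U'$ (unchanged) and once with ambient set $U' \setminus \{u_r\}$. Collecting subsets $S \subseteq \{1, \ldots, r\}$ by whether $r \notin S$ or $r \in S$ — the former from the first term, the latter from the telescoping sum over $d_r$ — recovers the claimed expansion.

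\textbf{Obstacle.} There is no real difficulty beyond formulating the inductive statement correctly: the ambient $U'$ must be permitted to strictly contain the distinguished $\{u_1, \ldots, u_r\}$, for otherwise the first term produced after telescoping in $u_r$ (where $U'$ still has size $r$ but only $r-1$ coordinates remain to telescope) cannot be fed back into the induction. With that convention in place the rest is mechanical bookkeeping.
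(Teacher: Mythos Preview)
Your proof is correct and follows essentially the same approach as the paper: induction on $r$, with Lemma \ref{lem:h-identity} providing the single-step telescoping. The only organizational difference is in how the induction is packaged. You strengthen the statement so the ambient set $U'$ may strictly contain the telescoping elements, then telescope once in $u_r$ and apply the hypothesis to the two resulting pieces, collecting subsets according to whether $r\in S$. The paper instead keeps the statement as is, telescopes sequentially in \emph{all} $r$ directions first to obtain
\[
\got{h}_U(x+l\cdot u)=\got{h}_U(x)+\sum_{1\le i\le r}\sum_{0\le d_i<l_i}\got{h}_{U\setminus\{u_i\}}\Bigl(x+\sum_{j<i}l_ju_j+d_iu_i\Bigr),
\]
and then applies the unmodified inductive hypothesis to each $\got{h}_{U\setminus\{u_i\}}$ term, implicitly using that setting $l_j=0$ for $j>i$ makes those summands empty; the final collection is by $i=\max S$. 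Your explicit strengthening and the paper's ``some $l_j=0$'' trick are two sides of the same coin, and neither is materially simpler than the other.
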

	\begin{proof}
		We prove by induction on $r=\#U$. When $r=1$, apply Lemma \ref{lem:h-identity} and we have
		\begin{align*}
			\begin{split}
				\got{h}_{U}(x+l_1u_1) &= \got{h}_U(x+(l_1-1)u_1) + \got{h}_\emptyset(x+(l_1-1)u_1)\\
				&=\cdots\\
				&=\got{h}_U(x) + \sum_{0\le d_1<l_1} \got{h}_\emptyset(x+d_1u_1).
			\end{split}
		\end{align*}
		For general $r$, iterating Lemma \ref{lem:h-identity}, we find
		\begin{align*}
			\begin{split}
				\got{h}_U(x+l\cdot u) &= \got{h}_U\Bigg(x+\sum_{0\le j<r}l_ju_j\Bigg) + \sum_{0\le d_r<l_r} \got{h}_{U\backslash\{u_r\}}\Bigg(x+\sum_{0\le j<r}l_ju_j +d_ru_r\Bigg)\\
				&=\cdots\\
				&=\got{h}_U(x) + \sum_{1\le i\le r} \sum_{0\le d_i<l_i}\got{h}_{U\backslash\{u_i\}}
				\Bigg(x+\sum_{0\le j<i}l_ju_j + d_iu_i\Bigg).
			\end{split}
		\end{align*}
		By induction hypothesis, we have for $1\le i\le r$ and $d_i\ge 0$,
		\begin{align*}
			\got{h}_{U\backslash\{u_i\}}\Bigg(x+\sum_{0\le j<i}l_ju_j + d_iu_i\Bigg) = \sum_{S\subseteq\{1,2,\cdots,i-1\}} \sum_{\substack{0\le d_j<l_j\\ j\in S}} \got{h}_{U\backslash\{u_j\}_{j\in S\cup\{i\}}}\Bigg(x+d_iu_i+\sum_{j\in S}d_ju_j\Bigg).
		\end{align*}
		Hence
		\begin{align*}
			\sum_{0\le d_i<l_i}\got{h}_{U\backslash\{u_i\}}\Bigg(x+\sum_{0\le j<i}l_ju_j + d_iu_i\Bigg) = \sum_{\{i\}\subseteq S\subseteq\{1,2,\cdots,i\}} \sum_{\substack{0\le d_j<l_j\\ j\in S}} \got{h}_{U\backslash\{u_j\}_{j\in S}}\Bigg(x+\sum_{j\in S}d_ju_j\Bigg).
		\end{align*}
		Note that if $S$ is a nonempty subset of $\{1,2,\cdots,r\}$, then there is a unique $1\le i\le r$ such that $S\subseteq\{1,2,\cdots,i\}$ and $S\supseteq \{i\}$, i.e., $i=\max S$. It follows that
		\begin{align*}
			\sum_{1\le i\le r} \sum_{0\le d_i<l_i}\got{h}_{U\backslash\{u_i\}}
			\Bigg(x+\sum_{0\le j<i}l_ju_j + d_iu_i\Bigg) = \sum_{\emptyset\ne S\subseteq\{1,2,\cdots,r\}}\sum_{\substack{0\le d_j<l_j\\ j\in S}}\got{h}_{U\backslash\{u_j\}_{j\in S}}\Bigg(x+\sum_{j\in S}d_ju_j\Bigg).
		\end{align*}
	\end{proof}
	
	\begin{cor}[Sum expression, second form]
		We have
		\begin{align*}
			L_{p,V,x,\cali{N}}(s,\omega_F) = (-1)^k\lim_{n\to\infty} \sum_{\substack{0\le l<q^n\\ \gcd(p,x+l\cdot v)=1}}
			\sum_{0\le d<l}\got{h}_\emptyset(x+d\cdot v) \chx{\nm(x+l\cdot v)}^{-s}.
		\end{align*}
	\end{cor}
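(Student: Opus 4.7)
The plan is to substitute the preceding corollary's expansion of $\got{h}_V(x+l\cdot v)$ into \eqref{eq:riemann-sum} and argue that, of the three types of terms produced, only the one indexed by the subset $S=\{1,\ldots,k\}$ survives in the $p$-adic limit. Applying that corollary with $U=V$ and $u=v$ rewrites
\[
\got{h}_V(x+l\cdot v)=\got{h}_V(x)+\sum_{0\le d<l}\got{h}_\emptyset(x+d\cdot v)+\sum_{\emptyset\ne S\subsetneq\{1,\ldots,k\}}\sum_{\substack{0\le d_j<l_j\\ j\in S}}\got{h}_{V\setminus\{v_j\}_{j\in S}}\Bigg(x+\sum_{j\in S}d_jv_j\Bigg),
\]
whose middle piece, inserted into \eqref{eq:riemann-sum}, is exactly the desired expression. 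Hence the corollary follows once we show that the contributions to \eqref{eq:riemann-sum} coming from the constant piece $\got{h}_V(x)$ and from each proper $\emptyset\ne S\subsetneq\{1,\ldots,k\}$ vanish as $n\to\infty$.

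For each such $S$ (with $S=\emptyset$ covering the constant case), the weight in front of $\chx{\nm(x+l\cdot v)}^{-s}$ depends only on $l_S:=(l_j)_{j\in S}$ and is uniformly $p$-adically bounded, since each $\got{h}_U$ is a bounded periodic function on the finite group $\cali{O}/\cali{N}$ and partial sums remain bounded via the strong triangle inequality. After interchanging the order of summation, the vanishing reduces to showing that the inner partial sum
\[
T_n(l_S):=\sum_{\substack{0\le l_{S^c}<q^n\\ \gcd(p,x+l\cdot v)=1}}\chx{\nm(x+l\cdot v)}^{-s}
\]
tends to $0$ uniformly in $l_S$ as $n\to\infty$.

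The main obstacle is establishing this uniform vanishing of $T_n(l_S)$. My plan is a $p$-adic scaling argument: since $V$ is a $\Z_p$-basis of $\cali{O}_p$, the index set for $l_{S^c}$ parameterizes a coset of the $\Z_p$-sublattice $\bigoplus_{i\in S^c}\Z_p v_i$ modulo $q^n$, of cardinality $q^{n|S^c|}$. Normalizing by this cardinality turns $q^{-n|S^c|}T_n(l_S)$ into a Haar-weighted Riemann sum for $\chx{\nm\alpha}^{-s}\1_{\cali{O}_p^\times}(\alpha)$ on the corresponding slice; by uniform continuity of $\chx{\nm\alpha}^{-s}$ on the compact set $\cali{O}_p^\times$, these Riemann sums converge and are bounded uniformly in $l_S$ by some constant $C$. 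Consequently $|T_n(l_S)|_p\le C\cdot|q^{n|S^c|}|_p$, which tends to $0$ as $n\to\infty$ since $|S^c|\ge 1$ and $q$ is a positive power of $p$. Combined with the boundedness of the weight factors, this yields the desired vanishing and completes the proof.
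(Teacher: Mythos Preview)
Your overall strategy matches the paper's exactly: substitute the expansion of $\got{h}_V(x+l\cdot v)$ from the preceding corollary into \eqref{eq:riemann-sum} and show that all pieces with $S\subsetneq\{1,\dots,k\}$ die in the limit. The paper dispatches the vanishing in one line by citing \cite[Lemma 4.6]{Zh23}, so the substance of your proposal lies in your direct argument for $T_n(l_S)\to 0$.

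That argument has a real gap. You normalize to $q^{-n|S^c|}T_n(l_S)$, call it a ``Haar-weighted Riemann sum,'' and assert that uniform continuity of $\chx{\nm\alpha}^{-s}$ makes this sequence convergent and $p$-adically bounded by some $C$, whence $|T_n(l_S)|_p\le C\,|q^{n|S^c|}|_p\to 0$. But in the non-archimedean world there is no bounded $p$-adic-valued Haar measure: the distribution $a+q^n\cali{O}_p\mapsto q^{-n}$ is unbounded, and Volkenborn-type averages $q^{-n}\sum_{a<q^n}f(a)$ are \emph{not} guaranteed to be $p$-adically bounded for merely uniformly continuous $f$ (one needs strict differentiability or a $C^1$ condition). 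So ``by uniform continuity \dots\ bounded'' does not follow, and your scaling inequality is unjustified as written. Your intuition here is archimedean: for real-valued $f$ the average $\tfrac{1}{N}\sum f(x_i)$ is automatically bounded by $\|f\|_\infty$, but $p$-adically $|q^{-n}|_p$ blows up.

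The fix is to bypass the normalization entirely and prove $T_n(l_S)\to 0$ directly from uniform continuity, which is what the cited lemma does. Concretely: given $\epsilon>0$, pick $m$ so that $|\alpha-\beta|_p\le |q|^m_p$ forces $|f(\alpha)-f(\beta)|_p<\epsilon$ for $f(\alpha)=\chx{\nm\alpha}^{-s}\1_{\cali{O}_p^\times}(\alpha)$; for $n>m$, group the sum over $l_{S^c}\in[0,q^n)^{|S^c|}$ into $q^{m|S^c|}$ blocks of size $q^{(n-m)|S^c|}$, and on each block write $f=f(\text{basepoint})+(\text{error of size }<\epsilon)$. The constant part contributes something of absolute value at most $|q^{n-m}|_p\,\|f\|$, and the error part at most $\epsilon$ by the ultrametric inequality. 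Hence $|T_n(l_S)|_p<\epsilon$ for $n$ large, uniformly in $l_S$. Combined with your (correct) observation that the weights are bounded, this closes the argument.
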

	\begin{proof}
		If $S\ne \{1,2,\cdots,k\}$, $\sum_{0\le d_j\le l_j, j\in S} \got{h}_{V\backslash\{v_j\}_{j\in S}}(x+\sum_{j\in S} d_jv_j)$ is independent of $l_i$ for any $i\notin S$. The continuity of the function $\chx{\nm(\cdot)}^{-s}$ then deems the relevant terms in \eqref{eq:riemann-sum} vanish as $n$ approaches $\infty$ (\textit{cf}.~\cite[Lemma 4.6]{Zh23}).
	\end{proof}
	\begin{rem}
		One should compare the above formula with equation (4.5) \textit{ibid}.
	\end{rem}
	
	\section{Derivative formula}
	\label{sec:FG}
	In the rest of this article, we will further suppose that, for all $V$ in the Shintani decomposition,
	\begin{align*}
		\cali{O}_p = \sum_{1\le i\le k} \Z_p v_i.
	\end{align*}
	By essentially the same proof of \cite[Proposition 5.2]{Zh23}, exploiting the fact that the function $\got{h}_\emptyset$ is $N$-periodic and is such that $\sum_{a\in \cali{O}/\cali{N}}\got{h}_\emptyset(a) = 0$, we obtain
	\begin{thm}\label{thm.FG}
		We have
		\begin{align*}
			L_{p,V,x,\cali{N}}'(0,\omega_F) = (-1)^{k-1}\sum_{0\le d<N}\got{h}_\emptyset(x+d\cdot v)\log_p\Gamma_{p,V}\left(\frac{x+d\cdot v}{N}\right) - k\log_p N L_{p,V,x,\cali{N}}(0,\omega_F).
		\end{align*}
	\end{thm}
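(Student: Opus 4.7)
The plan is to repeat the Ferrero--Greenberg-style computation of \cite[Proposition 5.2]{Zh23} \emph{mutatis mutandis}, now with the second-form sum expression from the previous corollary as the new input. Differentiating at $s=0$ yields
\begin{align*}
L'_{p,V,x,\cali{N}}(0,\omega_F) = (-1)^{k-1}\lim_n \sum_{\substack{0\le l<q^n\\ \gcd(p,x+l\cdot v)=1}} \sum_{0\le d<l} \got{h}_\emptyset(x+d\cdot v)\log_p\chx{\nm(x+l\cdot v)}.
\end{align*}
I would then interchange the order of summation, making $d$ the outer index and $l$ an inner index constrained by $d_j<l_j<q^n$.

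The core transformation is the reindexing $l = Nm + r$ componentwise with $0\le r_j<N$. Since $N$ is a rational scalar and $\nm$ is multiplicative of degree $k$,
\begin{align*}
\log_p\chx{\nm(x+l\cdot v)} = k\log_p N + \log_p\chx{\nm\bigl((x+r\cdot v)/N + m\cdot v\bigr)}.
\end{align*}
The constant $k\log_p N$ piece, when paired against the remaining $\sum_{0\le d<l}\got{h}_\emptyset(x+d\cdot v)$ and folded back through the second-form expression at $s=0$, contributes $-k\log_p N\cdot L_{p,V,x,\cali{N}}(0,\omega_F)$. For the other piece, regrouping the outer sum over $d$ by residue modulo $N$ (valid by $N$-periodicity of $\got{h}_\emptyset$) lets one identify the inner limit as $n\to\infty$ with the defining limit of $\log_p\Gamma_{p,V}((x+d\cdot v)/N)$; this identification is exactly the computation executed in \cite[\S5 and Appendix A]{Zh23}. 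Assembling the two contributions produces the claimed formula.

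The main obstacle is that both the reindexing $l=Nm+r$ and the regrouping of $d$ modulo $N$ leave behind partial-sum remainders---boundary boxes in either $l$ or $d$---that do not fit cleanly into the Ferrero--Greenberg template. These corrections should vanish in the limit $n\to\infty$ by virtue of the orthogonality identity $\sum_{a\in\cali{O}/\cali{N}}\got{h}_\emptyset(a)=0$: in each boundary term some coordinate direction collapses to a complete $N$-period sum of $\got{h}_\emptyset$, which is zero, while the remaining factors stay bounded in $\C_p$. This is the same cancellation mechanism deployed in \cite[Lemma 4.6 and Proposition 5.2]{Zh23}, and its adaptation to the present setting, although routine, is the delicate bookkeeping step of the proof.
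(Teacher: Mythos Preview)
Your proposal is correct and follows essentially the same approach as the paper: the paper's proof of this theorem is simply a pointer to \cite[Proposition 5.2]{Zh23} together with the two key inputs you identify, namely the $N$-periodicity of $\got{h}_\emptyset$ and the orthogonality $\sum_{a\in\cali{O}/\cali{N}}\got{h}_\emptyset(a)=0$, applied to the second-form sum expression. Your outline---differentiate, extract the factor $N^k$ from the norm to produce the $k\log_p N$ term, reduce $d$ modulo $N$ by periodicity, and kill the boundary remainders via the zero-sum property---is precisely the Ferrero--Greenberg template that the paper invokes (and that its Appendix~A.2.2 spells out in the one-variable case).
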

	Next, for $y\in F_+$ that is $\cali{N}$-integral, consider the complex Shintani $L$-function
	\begin{align*}
		L_{V,y,\cali{N}}(s) = \sum_{l_1,\cdots,l_k\ge 0} \frac{\got{h}_\emptyset(y+l\cdot v)}{\nm(y+l\cdot v)^s}.
	\end{align*}
	To state the next proposition we recycle some notation from \cite{Zh23}. Let $C(V) = \sum_{1\le i\le k}\R_{>0}v_i$, $\overline{C}(V)$ be its upper closure and $P(V)$ the fundamental parallelotope of $\overline{C}(V)$. If $\got{a}$ is a prime-to-$p$ integral ideal then we denote by $\tau_p$ the isomorphism on $\got{a}^{-1}\cap P(V)$ corresponding to the $p$-multiplication on the $p$-divisible group $\got{a}^{-1}/\Z\cdot V$.
	\begin{lem}\label{lem:interpolation-zero}
		Assume $p$ is inert in $F$. Let $\got{a}$ be a prime-to-$p$ integral ideal and $x\in \got{a}^{-1}\cap P(V)$. We have the interpolation formula
		\begin{align*}
			L_{p,V,x,\cali{N}}(0) = L_{V,x,\cali{N}}(0) - L_{V,\tau_p^{-1}x,\cali{N}}(0).
		\end{align*}
	\end{lem}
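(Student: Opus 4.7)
The plan is to split the integration domain $\cali{O}_p^\times = \cali{O}_p \setminus p\cali{O}_p$ in the integral representation \eqref{eq:partial-integral-representation}, so that
\begin{align*}
	L_{p,V,x,\cali{N}}(0) = \mu_{V,x,\cali{N}}(\cali{O}_p) - \mu_{V,x,\cali{N}}(p\cali{O}_p),
\end{align*}
and then to identify each of the two pieces with $L_{V,y,\cali{N}}(0)$ for an appropriate base point $y$.

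Concretely, specializing the second-form sum expression at $s=0$ gives
\begin{align*}
	L_{p,V,x,\cali{N}}(0) = (-1)^k \lim_{n\to\infty} \sum_{\substack{0\le l<q^n\\ \gcd(p,x+l\cdot v)=1}} \sum_{0\le d<l}\got{h}_\emptyset(x+d\cdot v).
\end{align*}
I would then split the coprimality condition $\gcd(p,x+l\cdot v)=1$ into \emph{all} $l$ minus those $l$ with $p \mid x+l\cdot v$. For the ``all-$l$'' piece, Shintani's evaluation of the zeta value at $s=0$ expresses $L_{V,x,\cali{N}}(0)$ as precisely such a finite combinatorial sum with the sign $(-1)^k$, so this piece can be read off directly, in the spirit of what is done in the proof of Theorem \ref{thm.FG}. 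For the ``$p$-divisible'' piece, the inertness of $p$ together with the fact that $V$ is a $\Z_p$-basis of $\cali{O}_p$ forces the existence of a unique residue vector $l_0 \in [0,p)^k$ for which $p \mid x + l_0 \cdot v$; writing $l = l_0 + pm$, I would substitute $(x+l\cdot v)/p = z + m\cdot v$ where $z=(x+l_0\cdot v)/p$ is a representative in $\got{a}^{-1}$ of $\tau_p^{-1}x$ modulo $\Z\cdot V$, and identify the resulting sum with $L_{V,\tau_p^{-1}x,\cali{N}}(0)$.

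The main obstacle will be bookkeeping: the representative $z$ in general differs from the canonical $P(V)$-representative $\tau_p^{-1}x$ by an integer combination of the $v_i$, possibly with negative coefficients, so the cone sum after substitution does not line up verbatim with $L_{V,\tau_p^{-1}x,\cali{N}}(0)$. Handling this will require using the $\cali{N}$-periodicity of $\got{h}_\emptyset$ combined with the zero-sum property $\sum_{a\in\cali{O}/\cali{N}}\got{h}_\emptyset(a)=0$ to show that the correction terms introduced by translating $z$ back into $P(V)$ vanish $p$-adically as $n\to\infty$, in the same spirit as \cite[Lemma 4.6]{Zh23}. Once this vanishing is in hand, assembling the two contributions yields the stated interpolation formula.
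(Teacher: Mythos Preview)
Your plan is correct and matches the paper's approach, which simply defers to Appendix~A of \cite{Zh23} and singles out the one new ingredient: in place of the character identity $\chi(pa)=\chi(p)\chi(a)$ used there, one uses $\got{h}_\emptyset(pa)=\got{h}_\emptyset(a)$ (valid because $\gcd(p,\cali{N})=1$). This invariance is precisely what is needed in your substitution step, and you should make it explicit: after writing $l=l_0+pm$ and $z=(x+l_0\cdot v)/p$, the inner sums $\sum_{0\le d<l}\got{h}_\emptyset(x+d\cdot v)$ must be rewritten in terms of $\got{h}_\emptyset(z+d'\cdot v)$, and it is the identity $\got{h}_\emptyset(p\,\cdot)=\got{h}_\emptyset(\cdot)$ that lets you pass from arguments $x+d\cdot v$ to arguments $(x+d\cdot v)/p$; your bookkeeping paragraph only addresses the lattice shift $z\leftrightarrow\tau_p^{-1}x$, not this rescaling. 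The paper also flags that inertness of $p$ is used exactly at the step you describe (existence and uniqueness of $l_0$), so your use of that hypothesis is on target.
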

	\begin{proof}
		The proof proceeds in the same way in Appendix A \textit{op.~cit.}; the only difference is that instead of having $\chi(pa) = \chi(p)\chi(a)$, we have $\got{h}_\emptyset(pa) = \got{h}_\emptyset(a)$. Note that the assumption that $p$ is inert is used crucially in the computation; see line 3-5 of p.~628 \textit{ibid}.
	\end{proof}
	
	\begin{proof}[Proof of Theorem \ref{thm.A}]
		By Lemma \ref{lem:interpolation-zero}, for any $1\le i\le h$, we have
		\begin{align*}
			\sum_{x\in P(V)\cap\got{a}_i^{-1}} L_{p,V,x,\cali{N}}(0) = \sum_{x\in P(V)\cap\got{a}_i^{-1}} L_{V,x,\cali{N}}(0) - \sum_{x\in P(V)\cap\got{a}_i^{-1}} L_{V,\tau_p^{-1}x,\cali{N}}(0) = 0.
		\end{align*}
		As
		\begin{align*}
			(1-\chi\omega_F(\cali{N})\chx{N}^{1-s})L_{F,p}(s,\chi\omega_F) = -\sum_{1\le i\le h}\chi(\got{a}_i)\chx{\nm(\got{a}_i)}^{-s}\sum_{V}\sum_{x\in P(V)\cap\got{a}_i^{-1}}L_{p,V,x,\cali{N}}(s),
		\end{align*}
		taking the derivative and using Theorem \ref{thm.FG}, we find
		\begin{align*}
			(1-\chi(\cali{N})N)L'_{F,p}(0,\chi\omega_F) = (-1)^k\sum_{i,V,x}\chi(\got{a}_i)\sum_{0\le d<N}\got{h}_\emptyset(x+d\cdot v)\log_p\Gamma_{p,V}\left(\frac{x+d\cdot v}{N}\right).
		\end{align*}
	\end{proof}

	\section{A product formula of bivariate $p$-adic Gamma values}
	\label{sec:gamma-product}
	\subsection{Background on genus $L$-functions}
	
	From now on $F$ will be a real quadratic field with discriminant $D$ prime to $p$. Recall a number $d$ is called a fundamental discriminant if $d=1$ or $d$ is the discriminant of a quadratic field. Suppose there is a factorization $D = D_1D_2$ where both $D_1,D_2$ are fundamental discriminants with $\gcd(D_1,D_2)=1$, then the genus character $\chi_{D_1,D_2}:\cl_+(F)\to \{\pm 1\}$ is the character whose kernel cuts out the extension $\Q(\sqrt{D_1},\sqrt{D_2})/F$. Furthermore, denoting by $\chi_{D_1}$ and $\chi_{D_2}$ the Dirichlet characters of quadratic fields $\Q(\sqrt{D}_1)$ and $\Q(\sqrt{D_2})$ respectively, then we have a factorization of complex $L$-functions:
	\begin{align}\label{eq:genus}
		L_F(s,\chi_{D_1,D_2}) = L(s,\chi_{D_1})L(s,\chi_{D_2}).
	\end{align}

	\subsection{The product formula}
	
	We now prove the following
	\begin{prop}\label{prop.prod-formula}
		Let $F$ be a real quadratic field whose discriminant $D$ admits a factorization $D = D_1D_2$ into fundamental discriminants $D_1,D_2<0$. Assume $p$ is inert in $F$, and let $i\in\{1,2\}$ be the unique integer such that $p$ is inert in $\Q(\sqrt{D_i})$; let $\{j\}=\{1,2\}\setminus\{i\}$. Then we have
		\begin{align}\label{eq:gamma-factorization}
			\begin{split}
				&(1-\chi(\cali{N})N)^{-1}\sum_{1\le i\le h}\chi(\got{a}_i)\sum_{x\in P(V)\cap\got{a}_i^{-1}}\sum_{0\le d<N}\got{h}_\emptyset(x+d\cdot v)\log_p\Gamma_{p,V}\left(\frac{x+d\cdot v}{N}\right)\\
				=& -2\sum_{0\le e_i<|D_i|}\chi_{D_i}(e_i)\frac{e_i}{|D_i|}\sum_{0\le e_j<|D_j|}\chi_{D_j}(e_j)\log_p\Gamma_p\left(\frac{e_j}{|D_j|}\right).
			\end{split}
		\end{align}
	\end{prop}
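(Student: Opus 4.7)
The plan is to use Theorem \ref{thm.A} to replace the LHS of \eqref{eq:gamma-factorization} with the $p$-adic derivative $L'_{F,p}(0,\chi_{D_1,D_2}\omega_F)$, then to factorize this Hecke $L$-function into Dirichlet factors via the genus decomposition \eqref{eq:genus} and evaluate each factor separately.

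First, since $F/\Q$ is quadratic we have $k=2$ and $(-1)^k = 1$, so Theorem \ref{thm.A} identifies the LHS of \eqref{eq:gamma-factorization} with $L'_{F,p}(0,\chi_{D_1,D_2}\omega_F)$. The crucial next step is to establish the factorization of $p$-adic $L$-functions
\begin{align*}
L_{F,p}(s,\chi_{D_1,D_2}\omega_F) = L_p(s,\chi_{D_1}\omega)L_p(s,\chi_{D_2}\omega).
\end{align*}
This proceeds by matching interpolation values at $s = 1-m$: twisting \eqref{eq:genus} by $\omega_F^{1-m} = \omega^{1-m}\circ\nm_{F/\Q}$ yields a matching factorization of complex $L$-values, while the Euler factors at $p$ on both sides coincide thanks to the identities $\chi_{D_1}(p)\chi_{D_2}(p) = -1$ and $\chi_{D_1}(p) + \chi_{D_2}(p) = 0$ (both equivalent to $p$ being inert in $F$), giving $(1-\chi_{D_1}(p)p^{m-1})(1-\chi_{D_2}(p)p^{m-1}) = 1 - p^{2(m-1)} = 1 - \chi_{D_1,D_2}(\got{p})\nm(\got{p})^{m-1}$ at the inert prime $\got{p}$. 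Density of interpolation points then upgrades this to an equality of $p$-adic analytic functions.

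Granted the factorization, the product rule at $s=0$ combined with the trivial zero $L_p(0,\chi_{D_j}\omega) = (1-\chi_{D_j}(p))L(0,\chi_{D_j}) = 0$ (since $\chi_{D_j}(p) = 1$) leaves
\begin{align*}
L'_{F,p}(0,\chi_{D_1,D_2}\omega_F) = L_p(0,\chi_{D_i}\omega) \cdot L'_p(0,\chi_{D_j}\omega).
\end{align*}
The first factor evaluates to $-\tfrac{2}{|D_i|}\sum_{e_i}\chi_{D_i}(e_i)e_i$ via $L(0,\chi_{D_i}) = -B_{1,\chi_{D_i}}$ together with $\chi_{D_i}(p) = -1$. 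For the second factor I will invoke the trivial-zero form of the Ferrero--Greenberg derivative formula $L'_p(0,\chi_{D_j}\omega) = \sum_{e_j}\chi_{D_j}(e_j)\log_p\Gamma_p(e_j/|D_j|)$, obtainable as a special case of the derivative formula of \cite[\S5]{Zh23} specialized to $F=\Q$ with the Dirichlet character $\chi_{D_j}$. Multiplying the two factors yields the RHS of \eqref{eq:gamma-factorization}.

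The main obstacle I anticipate is the Euler-factor bookkeeping in the factorization step: matching the Hecke Euler factor at the inert prime $\got{p}$ (with $\nm(\got{p}) = p^2$) against the product of two Dirichlet Euler factors at $p$ hinges critically on $p$ being inert in $F$, without which the proposition as stated would fail. Sign tracking in the Ferrero--Greenberg formula also requires care, but once these are pinned down the remainder of the argument reduces to routine arithmetic.
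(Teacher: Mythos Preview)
Your proposal is correct and follows essentially the same route as the paper: identify the left-hand side with $L'_{F,p}(0,\chi_{D_1,D_2}\omega_F)$ via Theorem~\ref{thm.A}, pass to the $p$-adic factorization $L_{F,p}(s,\chi_{D_1,D_2}\omega_F)=L_p(s,\chi_{D_1}\omega)L_p(s,\chi_{D_2}\omega)$ inherited from the genus decomposition~\eqref{eq:genus}, and then combine the trivial zero $L_p(0,\chi_{D_j}\omega)=0$ with the Ferrero--Greenberg derivative formula and the value $L_p(0,\chi_{D_i}\omega)=2L(0,\chi_{D_i})$. Your explicit check that the Euler factors at the inert prime match (via $\chi_{D_1}(p)\chi_{D_2}(p)=-1$ and $\chi_{D_1}(p)+\chi_{D_2}(p)=0$) is a point the paper leaves implicit, but otherwise the arguments coincide.
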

	\begin{proof}
		As $p$ is inert in $F$, $\chi_{D_1,D_2}(p) =1$ and thus $p$ is inert in one of $\Q(\sqrt{D_1}),\Q(\sqrt{D_2})$ and split in the other. Rearranging if necessary, we suppose $p$ is inert in $\Q(\sqrt{D_1})$. Now, the complex factorization \eqref{eq:genus} implies the $p$-adic one
		\begin{align*}
			L_{F,p}(s,\chi_{D_1,D_2}\omega_F) = L_p(s,\chi_{D_1}\omega)L_p(s,\chi_{D_2}\omega).
		\end{align*}
		The splitting assumption then implies $\ord_{s=0}L_p(s,\chi_{D_2}\omega)=1$ \cite[Proposition 2]{FG78}. Taking the derivative and applying Theorem \ref{thm.A} and the Ferrero--Greenberg formula, we have the desired equality; note that the factor $2$ in the second line comes from the Euler factor $1-\chi_{D_1}(p) = 2$.
	\end{proof}

	\begin{rem}
		It is worth pointing out that, as $p\nmid D$, the condition that $p$ is inert is equivalent to that $L_{F,p}(s,\chi_{D_1,D_2}\omega_F)$ has exactly vanishing order 1, namely
		\begin{align*}
			\ord_{s=0}L_p(s,\chi_{D_1}\omega) + \ord_{s=0}L_p(s,\chi_{D_2}\omega) =1.
		\end{align*}
		To see this, first recall that for an odd Dirichlet character $\psi$ of conductor prime to $p$, $L_p(s,\psi\omega)$ vanishes if and only if $\psi(p)=1$; if that is the case, the derivative is nonzero by formulas of Ferrero--Greenberg \cite{FG78} and Gross--Koblitz \cite{GK79}, eventually thanks to Brumer's independence \cite{Br67}. Now, if $p$ is split, let $\got{p}$ be a prime above $p$, and we find \cite[equation (12.59)]{Iwaniec}
		\begin{align*}
			\chi_{D_1}(p) = \chi_{D_2}(p) = \chi_{D_1,D_2}(\got{p}).
		\end{align*}
		Therefore $\ord_{s=0}L_{F,p}(s,\chi_{D_1,D_2}\omega_F)$ is $0$ or $2$.
	\end{rem}

	\subsection{The case of $\Q(\sqrt{3})$}\label{subsec:sqrt3}
	
	We now let $F = \Q(\sqrt{3})$, which has discriminant $12$. Then, we have a factorization of $12$ into the product of $-3$ and $-4$, the discriminants of $\Q(\sqrt{-3})$ and $\Q(i)$. We set the stage as follows:
	\begin{enumerate}
		\item Let $p$ be inert in $\Q(\sqrt{3})$, so $p\equiv 5,7\bmod 12$.
		\item With $\cl_+(F) = \Z/2$, we fix a set of representatives to be $\{\cali{O},(\sqrt{3}-1)\}$.
		\item Choose a Cassou-Nogu\`es ideal $\cali{N}\ne \cali{O}$ prime to $2p$; e.g., $\cali{N} = (\sqrt{3})$.
		\item Let $\varepsilon = 2+\sqrt{3}$. Then $\cali{O}^\times_+ = \varepsilon^{\Z}$ and the Shintani decomposition for $F$ is given by a single cone $V = \{1,\varepsilon\}$. We also have $\cali{O} = \Z+\Z\varepsilon$, so $\cali{O}_p = \Z_p+\Z_p\varepsilon$. Note that $P(V)\cap\cali{O}$ consists of $1$ and $P(V)\cap(\sqrt{3}-1)^{-1}$ consists of $1$ and $\frac{1+\varepsilon}{2} = \frac{3+\sqrt{3}}{2}$.
	\end{enumerate}
	\begin{proof}[Proof of Corollary \ref{cor:B}]
		This will be a simple verification using \eqref{eq:gamma-factorization}. Write $\chi = \chi_{-3,-4}$ for simplicity. We see that:
		\begin{align*}
			&(1-\chi(\cali{N})N)^{-1}\sum_{i,x}\chi(\got{a}_i)\sum_{0\le d<N}\got{h}_\emptyset(x+d\cdot v)\log_p\Gamma_{p,V}\left(\frac{x+d\cdot v}{N}\right)\\
			=&-(1-\chi_{-4}(N)N)^{-1}\sum_{0\le d<N}\got{h}_\emptyset\Big(\frac{1+\varepsilon}{2}+d\cdot v\Big)\log_p\Gamma_{p,V}\left(\frac{\frac{1+\varepsilon}{2}+d\cdot v}{N}\right)\\
			=&\frac{1}{1-\chi_{-4}(N)N}\log_p\left(\frac{\prod_{0\le a,b<N}\Gamma_{p,V}\left(\frac{\frac{1+\varepsilon}{2}+a+b\varepsilon}{N}\right)}{\prod_{a+b\varepsilon\equiv -(1+\varepsilon)/2 \bmod \cali{N}} \Gamma_{p,V}\left(\frac{\frac{1+\varepsilon}{2}+a+b\varepsilon}{N}\right)^N}\right)
		\end{align*}
		Now, if $p\equiv 5\bmod 12$, i.e., $p$ is inert in $\Q(\sqrt{-3})$, then we have $\sum_{0\le e_1<3}\chi_{-3}(e_1)\frac{e_1}{3}= -\frac{1}{3}$, and the right hand side of \eqref{eq:gamma-factorization} reads
		\begin{align*}
			\frac{2}{3}\sum_{0\le e_2<4}\chi_{-4}(e_2)\log_p\Gamma_p(e_2/4) = \frac{2}{3}\log_p\frac{\Gamma_p(1/4)}{\Gamma_p(3/4)}= \frac{4}{3}\log_p\Gamma_p(1/4),
		\end{align*}
		where the last equality follows from the product formula $\Gamma_p(z)\Gamma_p(1-z)=\pm 1$ for any $z\in \Z_p$.
		
		Similarly, if $p\equiv 7\bmod 12$, then the right hand side of \eqref{eq:gamma-factorization} becomes $4\log_p\Gamma_p(1/3)$. Exponentiate and we are done.
	\end{proof}
	\comment{\begin{rem}
		By varying the choice of representatives of $\cl_+(F)$, it is possible to obtain other product formulas for $\Q(\sqrt{3})$, which are likely more complicated due to the lack of control on the discrete sets $\got{a}^{-1}\cap P(V)$. For example, if $\gcd(3,N)=1$, then replacing $(\sqrt{3}-1)$ by $(\sqrt{3})$, the left hand side of \eqref{eq:product-formula} becomes
		\begin{align*}
			\frac{\prod_{0\le a,b<N}\Gamma_{p,V}\left(\frac{\frac{1+\varepsilon}{3}+a+b\varepsilon}{N}\right)\Gamma_{p,V}\left(\frac{\frac{2+2\varepsilon}{3}+a+b\varepsilon}{N}\right)}{\prod_{a+b\varepsilon\equiv -(1+\varepsilon)/3 \bmod \cali{N}} \Gamma_{p,V}\left(\frac{\frac{1+\varepsilon}{3}+a+b\varepsilon}{N}\right)^N\prod_{a+b\varepsilon\equiv -(2+2\varepsilon)/3 \bmod \cali{N}} \Gamma_{p,V}\left(\frac{\frac{2+2\varepsilon}{3}+a+b\varepsilon}{N}\right)^N}.
		\end{align*}
	\end{rem}}
	\begin{eg}
		We now numerically verify the product formula \eqref{eq:product-formula} for $\cali{N}=(\sqrt{3})$ and $p=5,7$ up to certain $p$-adic precision. In this case, using the symmetry $\Gamma_{p,V}(a,b) = \Gamma_{p,V}(b,a)$, we have a simpler formula:
		\begin{align*}
			\frac{\Gamma_{p,V}\left(\frac{1+3\varepsilon}{6}\right)\Gamma_{p,V}\left(\frac{1+5\varepsilon}{6}\right)\Gamma_{p,V}\left(\frac{3+5\varepsilon}{6}\right)}{\Gamma_{p,V}\left(\frac{1+\varepsilon}{6}\right)\Gamma_{p,V}\left(\frac{3+3\varepsilon}{6}\right)\Gamma_{p,V}\left(\frac{5+5\varepsilon}{6}\right)}
			=\begin{cases}
				\chx{\Gamma_p(1/4)}^{8/3}&\text{if }p\equiv 5\bmod 12;\\
				\chx{\Gamma_p(1/3)}^4&\text{if }p\equiv 7\bmod 12.
			\end{cases}
		\end{align*}
		Consider first $p=5$. We have
		\begin{align*}
			&\Gamma_{5,V}\left(\frac{1+3\varepsilon}{6}\right) \equiv 411, \qquad \Gamma_{5,V}\left(\frac{1+5\varepsilon}{6}\right) \equiv 81, \qquad 
			\Gamma_{5,V}\left(\frac{3+5\varepsilon}{6}\right) \equiv 86, \\
			&\Gamma_{5,V}\left(\frac{1+\varepsilon}{6}\right) \equiv 146, \qquad
			\Gamma_{5,V}\left(\frac{3+3\varepsilon}{6}\right) \equiv 441, \qquad
			\Gamma_{5,V}\left(\frac{5+5\varepsilon}{6}\right) \equiv 496\pmod{625}.
		\end{align*}
		As such,
		\begin{align*}
			\left(\frac{\Gamma_{5,V}\left(\frac{1+3\varepsilon}{6}\right)\Gamma_{5,V}\left(\frac{1+5\varepsilon}{6}\right)\Gamma_{5,V}\left(\frac{3+5\varepsilon}{6}\right)}{\Gamma_{5,V}\left(\frac{1+\varepsilon}{6}\right)\Gamma_{5,V}\left(\frac{3+3\varepsilon}{6}\right)\Gamma_{5,V}\left(\frac{5+5\varepsilon}{6}\right)}\right)^{3/8} \equiv 
			\left(\frac{411\times 81\times 86}{146\times 441\times 496}\right)^{3/8}\equiv 21\bmod 625,
		\end{align*}
		and we do have
		\begin{align*}
			\chx{\Gamma_5(1/4)} \equiv \left\langle\Gamma_5\left(\frac{1+3\times 5^7}{4}\right)\right\rangle \equiv 21\bmod 625.
		\end{align*}
		
		Next consider $p=7$, and we have
		\begin{align*}
			&\Gamma_{7,V}\left(\frac{1+3\varepsilon}{6}\right) \equiv 260, \qquad 
			\Gamma_{7,V}\left(\frac{1+5\varepsilon}{6}\right) \equiv 211, \qquad 
			\Gamma_{7,V}\left(\frac{3+5\varepsilon}{6}\right) \equiv 218, \\
			&\Gamma_{7,V}\left(\frac{1+\varepsilon}{6}\right) \equiv 190, \qquad
			\Gamma_{7,V}\left(\frac{3+3\varepsilon}{6}\right) \equiv 288, \qquad
			\Gamma_{7,V}\left(\frac{5+5\varepsilon}{6}\right) \equiv 204\pmod{343}.
		\end{align*}
		As such,
		\begin{align*}
			\left(\frac{\Gamma_{7,V}\left(\frac{1+3\varepsilon}{6}\right)\Gamma_{7,V}\left(\frac{1+5\varepsilon}{6}\right)\Gamma_{7,V}\left(\frac{3+5\varepsilon}{6}\right)}{\Gamma_{7,V}\left(\frac{1+\varepsilon}{6}\right)\Gamma_{7,V}\left(\frac{3+3\varepsilon}{6}\right)\Gamma_{7,V}\left(\frac{5+5\varepsilon}{6}\right)}\right)^{1/4} \equiv 
			\left(\frac{260\times 211\times 218}{190\times 288\times 204}\right)^{1/4} \equiv 15 \bmod 343,
		\end{align*}
		whereas
		\begin{align*}
			\chx{\Gamma_7(1/3)}\equiv \left\langle\Gamma_7\left(\frac{1+2\times 7^5}{3}\right)\right\rangle \equiv 15\bmod 343,
		\end{align*}
		as expected.
	\end{eg}
	\begin{rem}
		Here, we only verified the formula for some low $p$-adic precision because of the slow convergence of the formula defining $p$-adic Gamma values. It can be shown that if $(m,n)\equiv (m',n')\bmod p^{2n-1}$, then $\Gamma_{p,V}(m,n) \equiv \Gamma_{p,V}(m',n')\bmod p^n$. As such, to approximate a bivariate Gamma value up to $p^n$, we need to perform $O(p^{4n-2})$ operations as there are two variables.
	\end{rem}
	
	\subsection{Connection with Gross--Stark units}\label{subsec:GS}
	
	Keep the same notation and assumptions as in \S\ref{subsec:sqrt3} and Remark \ref{rem:Gross-Stark}. In particular, $\cali{N}$ will now denote a Cassou-Nogu\`es prime. Below, we will validate the claim in Remark \ref{rem:Gross-Stark} via the following
	\begin{prop}\label{prop:arithmetic}
		Let $\tau\in\gal(H/F)$ be such that $\tau|_{H_0}\ne 1$. We have
		\begin{align*}
			\frac{\prod_{0\le a,b<N}\Gamma_{p,V}\left(\frac{\frac{1+\varepsilon}{2}+a+b\varepsilon}{N}\right)}{\prod_{a+b\varepsilon\equiv -(1+\varepsilon)/2 \bmod \cali{N}} \Gamma_{p,V}\left(\frac{\frac{1+\varepsilon}{2}+a+b\varepsilon}{N}\right)^N}
			\sim \frac{\nm_{\Q(\sqrt{3})\otimes \Q_p/\Q_p}\left(\prod_{\sigma\in\gal(H/H_0)}u_{\cali{N}}^{\tau\sigma}/u_{\cali{N}}^\sigma\right)}{\prod_{\substack{y\in (\sqrt{3}-1)^{-1}\bar{\cali{N}}^{-1}\cap P(V)\\ y\notin \bar{\cali{N}}^{-1}\cap P(V)}} \Gamma_{p,V}(y)^{N-1}},
		\end{align*}
		where $\sim$ means an equality in $\Z_p$ up to $\mu_{p-1}p^{\Z}$.
	\end{prop}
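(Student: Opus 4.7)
The plan is to identify the logarithm of the Proposition's left hand side with $(1-\chi_{-4}(N)N)L'_{F,p}(0,\chi_{-3,-4}\omega_F)$ via the computation already performed in the proof of Corollary \ref{cor:B}, and then to rewrite this same $L'$-value Gross--Stark-theoretically by invoking \cite[Corollary 1.10]{Zh23} on $\chi_{-3,-4}$ viewed as a character of $\cl_+(\cali{N})$; equating the two expressions and exponentiating $p$-adically will deliver the claim up to $\mu_{p-1}p^{\Z}$.

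The first step is a direct re-reading of the proof of Corollary \ref{cor:B}: the chain of equalities displayed there unfolds as
\begin{align*}
(1-\chi_{-4}(N)N)\,L'_{F,p}(0,\chi_{-3,-4}\omega_F) = \log_p\!\left(\frac{\prod_{0\le a,b<N}\Gamma_{p,V}\!\left(\frac{\frac{1+\varepsilon}{2}+a+b\varepsilon}{N}\right)}{\prod_{a+b\varepsilon\equiv-(1+\varepsilon)/2\bmod\cali{N}}\Gamma_{p,V}\!\left(\frac{\frac{1+\varepsilon}{2}+a+b\varepsilon}{N}\right)^{N}}\right),
\end{align*}
with the genus-character cancellation between $\got{a}_1 = \cali{O}$ and $\got{a}_2 = (\sqrt{3}-1)$ (killing the $x = 1$ contributions) already baked in. Thus it suffices to produce a second expression for the same $L'$-value, involving Gross--Stark units, that exponentiates to the right hand side of the Proposition.

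Such a second expression is furnished by \cite[Corollary 1.10]{Zh23}: under the CM assumption on $H$ and the exceptional zero of $L_{F,p}(s,\chi_{-3,-4}\omega_F)$ at $s=0$ (forced by $\chi_{-3,-4}(p)=1$, which is automatic since $p$ is inert in $F$ and splits completely in $H_0 = \Q(\sqrt{3},i)$), that Corollary expresses the same $L'$-value, up to $\mu_{p-1}p^{\Z}$, as
\begin{align*}
(1-\chi_{-4}(N)N)L'_{F,p}(0,\chi_{-3,-4}\omega_F) \sim \log_p\nm_{\Q(\sqrt 3)\otimes\Q_p/\Q_p}\!\Bigg(\prod_{\sigma \in \gal(H/H_0)} \frac{u_{\cali{N}}^{\tau\sigma}}{u_{\cali{N}}^\sigma}\Bigg) - (N-1)\!\!\sum_{\substack{y\in(\sqrt{3}-1)^{-1}\bar{\cali{N}}^{-1}\cap P(V)\\ y\notin \bar{\cali{N}}^{-1}\cap P(V)}}\!\!\log_p\Gamma_{p,V}(y).
\end{align*}
Here the Gross--Stark norm collects the generic $\got{h}_\emptyset = -1$ contribution of the derivative formula of \emph{loc.~cit.}, while the Gamma correction assembles the exceptional $\got{h}_\emptyset = N-1$ ``Cassou-Nogu\`es contribution''; the appearance of $\bar{\cali{N}}$ (rather than $\cali{N}$) and of the non-principal fractional ideal $(\sqrt{3}-1)^{-1}$ traces back to the identification of $\tau_p \in \gal(H/F)$ with Galois conjugation on $F$, forced by $p$ being inert in $\Q(\sqrt{3})$ and already used in Lemma \ref{lem:interpolation-zero}.

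Equating these two expressions and exponentiating gives the Proposition. The main obstacle is the combinatorial bookkeeping in the cancellation step: one must verify that on the Gross--Stark side the four-term sum over $(\got{a}_i,x)\in\{\cali{O},(\sqrt{3}-1)\}\times(\got{a}_i^{-1}\cap P(V))$ produced by the derivative formula of \cite[\S5]{Zh23} collapses, under the weighting $\chi_{-3,-4}(\got{a}_i)$, exactly to the single-coset product $\prod_{\sigma\in\gal(H/H_0)}u_{\cali{N}}^{\tau\sigma}/u_{\cali{N}}^\sigma$ — which is the Gross--Stark analogue of the Gamma-side collapse already visible in the proof of Corollary \ref{cor:B} — and that the ``bookkeeping residue'' left behind is precisely the correction product over $y\in (\sqrt{3}-1)^{-1}\bar{\cali{N}}^{-1}\cap P(V)\setminus \bar{\cali{N}}^{-1}\cap P(V)$. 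Once this matching is carried out, the common normalizing factor $1-\chi_{-4}(N)N$ cancels and the $\sim$-equivalence modulo $\mu_{p-1}p^{\Z}$ drops out.
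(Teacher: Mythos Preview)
Your route differs from the paper's, and the step you defer is where the actual content lies. The paper's proof never passes through $L'_{F,p}(0,\chi_{-3,-4}\omega_F)$ at all. It works directly with the (corrected) formula \cite[equation (1.3)]{Zh23}, which expresses $\log_p\nm_{F\otimes\Q_p/\Q_p}u_{\cali{N}}^{\sigma_{\got{a}}}$ as an explicit product of $\Gamma_{p,V}$-values indexed by $x\in P(V)\cap\got{a}^{-1}$ and a congruence condition modulo $\cali{N}$. Using that $\cali{N}$ is \emph{prime} (so every nonzero class in $\cali{O}/\cali{N}$ is a unit), the paper writes down two separate $\sim$-identities: one for $\prod_{\tau|_{H_0}\ne 1}\nm(u_{\cali{N}}^\tau)$, which picks up both $x=1$ and $x=\frac{1+\varepsilon}{2}$, and one for $\prod_{\sigma|_{H_0}=1}\nm(u_{\cali{N}}^\sigma)$, which picks up only $x=1$. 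Taking the quotient cancels the $x=1$ contributions, and Lemma~\ref{lem:fun-domain} then identifies the residual $x=\frac{1+\varepsilon}{2}$ terms satisfying the congruence $\frac{1+\varepsilon}{2}+a+b\varepsilon\equiv 0\bmod\cali{N}$ with the set $(\sqrt{3}-1)^{-1}\bar{\cali{N}}^{-1}\cap P(V)\setminus\bar{\cali{N}}^{-1}\cap P(V)$.

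Your proposal, by contrast, asserts that \cite[Corollary 1.10]{Zh23} yields your second displayed formula and relegates the verification to ``combinatorial bookkeeping.'' But that bookkeeping \emph{is} the proof: Corollary~1.10 does not hand you the collapsed form with $\prod_\sigma u_{\cali{N}}^{\tau\sigma}/u_{\cali{N}}^\sigma$ and the specific correction set; extracting it requires exactly the primality of $\cali{N}$, the two-case split over $\gal(H/F)$, and Lemma~\ref{lem:fun-domain}, none of which you invoke. There is also a delicate point you gloss over: the results of \cite[\S5]{Zh23} are stated for characters of conductor $\cali{I}\ne\cali{O}$, whereas $\chi_{-3,-4}$ has conductor $\cali{O}$, so applying Corollary~1.10 to it ``viewed on $\cl_+(\cali{N})$'' needs justification. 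The paper's direct approach via equation~(1.3) and Lemma~\ref{lem:fun-domain} sidesteps both issues.
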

	
	We will need a lemma:
	\begin{lem}\label{lem:fun-domain}
		The set $(\sqrt{3}-1)^{-1}\bar{\cali{N}}^{-1}\cap P(V)\setminus\bar{\cali{N}}^{-1}\cap P(V)$ admits an explicit parametrization
		\begin{align*}
			\left\{\frac{\frac{1+\varepsilon}{2}+a+b\varepsilon}{N}:0\le a,b<N,\frac{1+\varepsilon}{2}+a+b\varepsilon\equiv 0\bmod \cali{N}\right\}.
		\end{align*}
	\end{lem}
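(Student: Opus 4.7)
The plan is to exhibit the parametrization as a bijection onto the difference set by comparing cardinalities and verifying membership directly.

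First, I would pin down the size of the difference set. Since $\cali{N}\bar{\cali{N}}=(N)$ and $|\nm(\sqrt{3}-1)|=2$, the chain of fractional ideals $\cali{O}\subset\bar{\cali{N}}^{-1}\subset (\sqrt{3}-1)^{-1}\bar{\cali{N}}^{-1}$ has successive indices $N$ and $2$. As $P(V)$ is a fundamental domain for $\cali{O}$ in $F\otimes\R$ (with the upper closure boundary convention, as confirmed by the computation $P(V)\cap(\sqrt{3}-1)^{-1}=\{1,(1+\varepsilon)/2\}$ in \S\ref{subsec:sqrt3}), one obtains $|\bar{\cali{N}}^{-1}\cap P(V)|=N$ and $|(\sqrt{3}-1)^{-1}\bar{\cali{N}}^{-1}\cap P(V)|=2N$, so the difference set has exactly $N$ elements.

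Next, I would interpret the congruence and count the parametrized set. Because $\cali{N}$ is odd (it has norm $N$ prime to $2p$), the element $1/2$ lies in the localization $\cali{O}_{(\cali{N})}$, and the condition $\frac{1+\varepsilon}{2}+a+b\varepsilon\equiv 0\bmod\cali{N}$ is equivalent to the honest congruence $1+\varepsilon+2(a+b\varepsilon)\equiv 0\bmod\cali{N}$ in $\cali{O}/\cali{N}\simeq\Z/N$. For each $b\in[0,N)$ this is a single linear equation with a unique solution $a\in[0,N)$, so the parametrized set has exactly $N$ elements. Injectivity of the map $(a,b)\mapsto\frac{(1+\varepsilon)/2+a+b\varepsilon}{N}$ is immediate from the fact that $\{1,\varepsilon\}$ is a $\Z$-basis of $\cali{O}$.

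The main substance, and the only genuine calculation, is to show each parametrized $y$ lies in the difference set. Writing $y=\frac{a+1/2}{N}\cdot 1+\frac{b+1/2}{N}\cdot\varepsilon$ places both coordinates strictly in $(0,1)$, so $y$ is in the interior of $P(V)$. Since $Ny=\frac{1+\varepsilon}{2}+a+b\varepsilon$ has half-integer coordinates in the basis $\{1,\varepsilon\}$ and therefore does not lie in $\cali{O}$, it cannot lie in $\cali{N}$, hence $y\notin N^{-1}\cali{N}=\bar{\cali{N}}^{-1}$. The only nontrivial step is verifying $y\in(\sqrt{3}-1)^{-1}\bar{\cali{N}}^{-1}$, i.e.\ $N(\sqrt{3}-1)y\in\cali{N}$: using the key identity $(\sqrt{3}-1)\frac{1+\varepsilon}{2}=\sqrt{3}$, this becomes $\sqrt{3}+(\sqrt{3}-1)(a+b\varepsilon)\in\cali{N}$, which after multiplying by the unit $2(\sqrt{3}-1)^{-1}\in(\cali{O}/\cali{N})^\times$ recovers the assumed condition. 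With both sides of cardinality $N$ and the map injective into the difference set, this forces the claimed bijection.
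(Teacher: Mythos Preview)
Your proof is correct and takes a genuinely different route from the paper's.

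The paper establishes the set equality by proving both inclusions directly. For the forward inclusion it takes $x$ in the difference set, notes that $Nx\in(\sqrt{3}-1)^{-1}\cap\overline{C}(V)$ forces $Nx$ to have the form $1+a+b\varepsilon$ or $\frac{1+\varepsilon}{2}+a+b\varepsilon$, and rules out the first case via unique factorization of ideals. For the backward inclusion it computes $Ny(\sqrt{3}-1)$ explicitly and then excludes $y\in\bar{\cali{N}}^{-1}$ by a norm calculation showing $\nm_{F/\Q}(Ny(\sqrt{3}-1))$ is odd.

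You instead replace the forward inclusion entirely by a cardinality count: since $P(V)$ is a fundamental domain for $\cali{O}=\Z+\Z\varepsilon$, the intersections with $\bar{\cali{N}}^{-1}$ and $(\sqrt{3}-1)^{-1}\bar{\cali{N}}^{-1}$ have sizes $N$ and $2N$, so the difference set has $N$ elements, matching the parametrized set. Your verification of the backward inclusion is also streamlined: you observe $Ny\notin\cali{O}$ (half-integer coordinates) to get $y\notin\bar{\cali{N}}^{-1}=N^{-1}\cali{N}$ immediately, avoiding the norm computation; and you reduce the membership $(\sqrt{3}-1)Ny\in\cali{N}$ to the assumed congruence by multiplying by the unit $\sqrt{3}+1=2(\sqrt{3}-1)^{-1}$ in $(\cali{O}/\cali{N})^\times$, rather than expanding everything in coordinates. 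The counting argument is cleaner and more conceptual; the paper's direct double inclusion is more self-contained in that it does not invoke the lattice-index interpretation of $P(V)$, and it yields the forward direction constructively.
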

	\begin{proof}
		We first show the former set is contained in the latter. Let $x\in (\bar{\cali{N}}^{-1}(\sqrt{3}-1)^{-1}\setminus\bar{\cali{N}}^{-1})\cap P(V)$, then $Nx\in (\sqrt{3}-1)^{-1}\cap \overline{C}(V)$. So there exists $a,b\in \Z_{\ge 0}$ such that either $Nx = 1+a+b\varepsilon$ or $Nx = \frac{1+\varepsilon}{2}+a+b\varepsilon$.
		
		Suppose $Nx = 1 + a+b\varepsilon$, then $Nx\in \cali{O}$, so $x \in (N)^{-1}$. It follows that $x\in (N)^{-1}\cap \bar{\cali{N}}^{-1}(\sqrt{3}-1)^{-1} = \bar{\cali{N}}^{-1}$, by the unique factorization of ideals. This contradicts the assumption that $x\notin \bar{\cali{N}}^{-1}$.
		
		Thus $Nx = \frac{1+\varepsilon}{2} + a + b\varepsilon$. As $N$ is odd, we have an identification $(\sqrt{3}-1)^{-1}/(\sqrt{3}-1)^{-1}\cali{N}= \cali{O}/\cali{N}$. Therefore, since $Nx \in (\sqrt{3}-1)^{-1}\cali{N}$, we must have $\frac{1+\varepsilon}{2}+ a+ b\varepsilon\equiv 0\bmod \cali{N}$. This shows $x$ belongs to the second set.
		
		Now we tackle the inverse inclusion, so let $y = (\frac{1+\varepsilon}{2}+a+b\varepsilon)/N$ be in the latter set. By direct computation, $Ny(\sqrt{3}-1) = -(2+3a+b)+(a+b+1)\varepsilon \in \cali{O}$. Moreover, as $Ny\equiv 0\bmod \cali{N}$, we have $Ny(\sqrt{3}-1) \in\cali{N}$. This shows $y\in (\sqrt{3}-1)^{-1}\bar{\cali{N}}^{-1}$. To conclude the proof, we note that $Ny(\sqrt{3}-1)\notin (\sqrt{3}-1)\cali{N}$, as
		\begin{align*}
			\nm_{\Q(\sqrt{3})/\Q}\left(Ny(\sqrt{3}-1)\right)= (2+3a+b)^2 -4(2+3a+b)(a+b+1)+(a+b+1)^2 \equiv 1\bmod 2.
		\end{align*}  
	\end{proof}
	\begin{proof}[Proof of Proposition \ref{prop:arithmetic}]
		Firstly, note that there is a typo in \cite[equation (1.3)]{Zh23}, which should be corrected as ($u_{\cali{N}}$ is denoted as $u_p$ \textit{loc.~cit}.)
		\begin{align}\label{eq:gen-GK}
			\log_p\nm_{F\otimes \Q_p/\Q_p}u_{\cali{N}}^{\sigma_{\got{a}}} = (-1)^k\sum_{e\in E_{\cali{N}}}\sum_{0\le j<\nu}\sum_{x\in P(V)\cap\got{a}_i^{-1}}
			\sum_{\substack{0\le d_1,\cdots,d_k<N\\ x+d\cdot v\equiv ep^j y\bmod \cali{N}}} \Gamma_{p,V}\left(\frac{x+d\cdot v}{N}\right),
		\end{align}
		where $E_{\cali{N}}$ is the image of totally positive units of $\cali{O}$ in $(\cali{O}/\cali{N})^\times$ via reduction. The product on $e$ and $j$ comes from the following diagram describing the structure of $\cl_+(\cali{N})/(p)^{\Z}$ where rows are exact (\textit{cf.}~\cite[p.~102]{Hida}):
		\[
			\begin{tikzcd}
				0 \ar[r] & E_{\cali{N}} \ar[r] \ar[d] & (\cali{O}/\cali{N})^\times \ar[r]\ar[d] & \cl_+(\cali{N}) \ar[r] \ar[d] & \cl_+(F) \ar[r] \ar[d,equal]& 0\\
				0 \ar[r] & E_{\cali{N}}/p^{\Z}\cap E_{\cali{N}} \ar[r] & (\cali{O}/\cali{N})^\times/p^{\Z} \ar[r] & \cl_+(\cali{N})/(p)^{\Z} \ar[r] & \cl_+(F) \ar[r] & 0
			\end{tikzcd}.
		\]
		
		Now, the assumption that $\cali{N}$ is a prime means that if $a\in \cali{O}/\cali{N}$ is nonzero, then $a\in(\cali{O}/\cali{N})^\times$. As such, by \eqref{eq:gen-GK}, we have two identities up to $\mu_{p-1}p^{\Z}$
		\begin{align*}
			\prod_{\substack{\tau\in\gal(H/F)\\ \tau|_{H_0}\ne 1}} \nm_{\Q(\sqrt{3})\otimes\Q_p/\Q_p}(u_{\cali{N}}^\tau)
			\sim
			\prod_{\substack{0\le a,b<N\\ a+b\varepsilon\not\equiv -1\bmod\cali{N} }} \Gamma_{p,V}\left(\frac{1+a+b\varepsilon}{N}\right)
			\prod_{\substack{0\le a,b<N\\ a+b\varepsilon\not\equiv -\frac{1+\varepsilon}{2}\bmod\cali{N} }} \Gamma_{p,V}\left(\frac{\frac{1+\varepsilon}{2}+a+b\varepsilon}{N}\right)
		\end{align*}
		and
		\begin{align*}
			\prod_{\substack{\sigma\in\gal(H/F)\\ \tau|_{H_0}= 1}} \nm_{\Q(\sqrt{3})\otimes\Q_p/\Q_p}(u_{\cali{N}}^\sigma)
			\sim
			\prod_{\substack{0\le a,b<N\\ a+b\varepsilon\not\equiv -1\bmod\cali{N} }} \Gamma_{p,V}\left(\frac{1+a+b\varepsilon}{N}\right).
		\end{align*}
		Taking the quotient and employing Lemma \ref{lem:fun-domain}, we get the desired result.
	\end{proof}
	
	\appendix
	
	\section{Arithmetic properties arising from the variance of $N$}
	
	One remarkable feature of the product formula \eqref{eq:product-formula} is the freedom in choosing the Cassou-Nogu\`es ideal $\cali{N}$; in doing so, the only change on the right hand side is the Euler regularization factor in the exponent. In other words,
	\begin{align*}
		\frac{1}{1-\chi_{-4}(N)N}\log_p\left(\frac{\prod_{0\le a,b<N}\Gamma_{p,V}\left(\frac{\frac{1+\varepsilon}{2}+a+b\varepsilon}{N}\right)}{\prod_{a+b\varepsilon\equiv -(1+\varepsilon)/2 \bmod \cali{N}} \Gamma_{p,V}\left(\frac{\frac{1+\varepsilon}{2}+a+b\varepsilon}{N}\right)^N}\right)
	\end{align*}
	is independent of $\cali{N}$. We speculate that this independence could be explained by certain distribution relations of the Gross--Stark units on the arithmetic side. While it is not possible to verify this speculation due to the lack of an arithmetic interpretation of the product $\prod_{a+b\varepsilon\equiv -(1+\varepsilon)/2 \bmod \cali{N}} \Gamma_{p,V}\left(\frac{\frac{1+\varepsilon}{2}+a+b\varepsilon}{N}\right)$ as explained in Remark \ref{rem:Gross-Stark}, here we instead inspect two well-studied instances where similar independence arises from the Euler regularization; their arithmetic counterparts are product relations of cyclotomic units and of Gauss sums (after Davenport--Hasse), respectively.
	
	\subsection{First instance: cyclotomic units}
	
	Let $\chi$ be a nontrivial Dirichlet character of conductor $M$, and $L_p(s,\chi)$ be the Kubota--Leopoldt $p$-adic $L$-function. Let $N>1$ be an integer prime to $pM$. The formula of Leopoldt \cite[\S5.4]{Iw} asserts that, for a fixed primitive $M$-th root of unity $\zeta$,
	\begin{align*}
		L_p(1,\chi) = (1-\chi(N))^{-1}\frac{\tau(\chi)}{M}(1-\chi(p)/p)\sum_{0\le a<M} \chi^{-1}(a)\sum_{\lambda^N=1,\lambda\ne 1} \log_p(1-\lambda \zeta^a).
	\end{align*}
	Here $\tau(\chi) = \sum_{a\in (\Z/M)^\times} \chi(a)\zeta^a$ is the classical Gauss sum. As one can verify (see p.~60, \textit{ibid.}), the independence of $N$ of the right hand side is a consequence of the product formula of cyclotomic units
	\begin{align*}
		\prod_{\lambda^N=1,\lambda\ne 1}(1-\lambda\zeta^a) = \frac{1-\zeta^{aN}}{1-\zeta^a}.
	\end{align*}
	
	\subsection{Second instance: Gauss sums}
	
	Below we will first prove a product formula \eqref{eq:gamma-distribution} of Morita $p$-adic Gamma values by comparing a derivative formula of the regularized Kubota--Leopoldt $p$-adic $L$-function to that of Ferrero--Greenberg. We then use the Gross--Koblitz formula to show that the product formula translates to the Davenport--Hasse distribution relation of Gauss sums.

	\subsubsection{Measure computation}
	Let $M>1$ be an integer and $\chi: (\Z/M)^\times\to \bar{\Q}^\times$ be a nontrivial Dirichlet character. Let $N>1$ be an auxiliary integer that is prime to both $p$ and $M$. Denote by $L_p(s,\chi\omega)$ the Kubota--Leopoldt $p$-adic $L$-function that interpolates $(1-\chi(p)p^{k-1})L(1-k,\chi)$ for all $k\ge 1$, $k\equiv 1\bmod p-1$. 
	
	Consider the power series
	\begin{align*}
		f_{\chi,N}(t) = \sum_{0\le a<M} \chi(a)\left[\frac{t^a}{1-t^M} - N\frac{t^{(a/N)^\flat_M N}}{1-t^{MN}}\right] \in R[[t-1]].
	\end{align*}
	We claim that the measure $\mu_{\chi,N}$ on $\Z_p$, which is the Fourier inverse of $f_{\chi,N}$, gives an integral representation
	\begin{align*}
		L_{p,N}(s,\chi\omega):=(1-\chx{N}^{1-s}\chi\omega(N)) L_p(s,\chi\omega) = \int_{\Z_p^\times} \chx{x}^{-s}\mu_{\chi,N}(x).
	\end{align*}
	To see this, note that for $f_{\chi,N}^{(p)}(t) = f_{\chi,N}(t) - \frac{1}{p}\sum_{\zeta:\zeta^p=1}f_{\chi,N}(\zeta t)$, the following interpolation holds for all $k\ge 1$ and $k\equiv 1\bmod p-1$:
	\begin{align*}
		\left(t\frac{d}{dt}\right)^{k-1}f_{\chi,N}^{(p)}\Big|_{t=1} &= (1 - N^k\chi(N))
		\sum_{n\ge 1, p\nmid n}\chi(n)n^{k-1}t^{k-1}\Bigg|_{t=1}\\
		&=(1-N^k\chi(N))L_p(1-k,\chi\omega).
	\end{align*}
	Therefore
	\begin{align*}
		\int_{\Z_p^\times}\chx{x}^{-s}\mu_{\chi,N}(x) &= \lim_{\substack{k \to 1-s\\ k\equiv 1\bmod p-1}} \int_{\Z_p^\times}x^{k-1}\mu_{\chi,N}(x) = 
		\lim_{\substack{k \to 1-s\\ k\equiv 1\bmod p-1}}
		\left(t\frac{d}{dt}\right)^{k-1}f_{\chi,N}^{(p)}\Big|_{t=1}\\
		&=\lim_{\substack{k \to 1-s\\ k\equiv 1\bmod p-1}}(1-N^k\chi(N))L_p(1-k,\chi\omega) = (1-\chx{N}^{1-s}\chi\omega(N))L_p(s,\chi\omega).
	\end{align*}
	
	Next, we will compute some explicit period formulas of $\mu_{\chi,N}$. Write $f_\chi(t) = \sum_{0\le a<M} \chi(a)\frac{t^a}{1-t^M}$, then we have $f_{\chi}(t)\in R[[t-1]]$ (see, e.g., \cite[\S2]{Zh22}) and $f_{\chi,N}(t) = f_\chi(t) - \chi(N)N f_\chi(t^N)$. Denote by $\mu_\chi$ the corresponding measure of $f_\chi$. By Theorem 3.3 \textit{op.~cit.}, if $p^n\equiv 1\bmod M$, then for all $0\le a<p^n$, we have
	\begin{align*}
		\mu_{\chi}(a+p^n\Z_p) = L(0,\chi) - \sum_{0\le d<a}\chi(d).
	\end{align*}
	We then have
	\begin{align}\label{eq:1}
		\begin{split}
			\mu_{\chi,N}(a+p^n\Z_p) &= \mu_\chi(a+p^n\Z_p) - \chi(N)N\mu_\chi(a/N+p^n\Z_p)\\
			&= -\sum_{0\le d<a}\chi(d) + \chi(N)N\sum_{0\le d<(a/N)^\flat_{p^n}}\chi(d) + (1-\chi(N)N)L(0,\chi).
		\end{split}
	\end{align}
	\comment{To state the next lemma, recall that if $p^n\equiv 1\bmod N$, then there is an involution $\iota_N$ on the set $\{1\le m<p^n: p\nmid m\}$ due to Ferrero--Greenberg, which sends $m = kN + m^\sharp_N$ to $\frac{p^n-1}{N}(N-m^\sharp_N) + (k+1)$.
	\begin{lem}
		Suppose $0\le a<p^n$ with $p^n\equiv 1\bmod MN$. Then
		\begin{align*}
			\left(\frac{a}{N}\right)^\flat_{p^n} = \frac{a + (-a/p^n)^\flat_Np^n}{N}\equiv \frac{a+(-a)^\flat_N}{N} \equiv \iota_N(a)\bmod M.
		\end{align*}
	\end{lem}
	\begin{proof}
		The first equality already appears in Proposition 3.2 \textit{op.~cit.}, and the second is formal. To prove the third, note that writing $a = kN + a^\sharp_N$, then $\iota_N(a) = (k+1) + \frac{p^n-1}{N}(N-a^\sharp_N)\equiv k+1\bmod M$. On the other hand, we do have $a+(-a)^\flat_N = (k+1)N$.
	\end{proof}}
	\begin{lem}
		Suppose $n\in \Z_{\ge 0}$ is such that $p^n\equiv 1\bmod MN$. We have
		\begin{align*}
			\mu_{\chi,N}(a+p^n\Z_p) = \sum_{0\le d<a} \chi(d)\got{h}_\emptyset(d) +(1-\chi(N)N)L(0,\chi),
		\end{align*}
		where $\got{h}_\emptyset(a) = -1 + N\1_{N\mid a}$.
	\end{lem}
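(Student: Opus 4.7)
The plan is to rewrite the identity \eqref{eq:1} already established so that the middle term $\chi(N)N\sum_{0\le d<(a/N)^\flat_{p^n}}\chi(d)$ becomes $N\sum_{\substack{0\le d<a\\ N\mid d}}\chi(d)$. Once this is in hand, the lemma follows at once: combining with the surviving term $-\sum_{0\le d<a}\chi(d)$, the coefficient of $\chi(d)$ in the resulting single sum is exactly $\got{h}_\emptyset(d)=-1+N\1_{N\mid d}$, while the constant contribution $(1-\chi(N)N)L(0,\chi)$ remains untouched.

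To perform the rewrite, I first record a convenient closed form for $(a/N)^\flat_{p^n}$. Since $p^n\equiv 1\bmod N$, the congruence $N\cdot(a/N)^\flat_{p^n}\equiv a\bmod p^n$ together with the divisibility condition on the numerator forces
\begin{align*}
(a/N)^\flat_{p^n}=\frac{a+(-a)^\flat_N\cdot p^n}{N}=\left\lceil a/N\right\rceil+(-a)^\flat_N\cdot\frac{p^n-1}{N},
\end{align*}
where the second equality uses $\frac{a+(-a)^\flat_N}{N}=\lceil a/N\rceil$, verified by splitting into the cases $N\mid a$ and $N\nmid a$. The crucial observation is that the shift $(-a)^\flat_N(p^n-1)/N$ is an integer multiple of $M$, since the hypothesis $MN\mid p^n-1$ forces $M\mid(p^n-1)/N$.

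The final ingredient is the orthogonality of the nontrivial Dirichlet character $\chi\bmod M$: its sum over any interval of integers whose length is a multiple of $M$ vanishes. Applied to the gap between $\lceil a/N\rceil$ and $(a/N)^\flat_{p^n}$, this gives
\begin{align*}
\sum_{0\le d<(a/N)^\flat_{p^n}}\chi(d)=\sum_{0\le d<\lceil a/N\rceil}\chi(d).
\end{align*}
Then reindex $d=Ne$ using $\chi(Ne)=\chi(N)\chi(e)$, and note that the multiples of $N$ strictly less than $a$ are exactly $\{Ne:0\le e<\lceil a/N\rceil\}$; this yields $\chi(N)\sum_{0\le e<\lceil a/N\rceil}\chi(e)=\sum_{\substack{0\le d<a\\ N\mid d}}\chi(d)$, completing the proof. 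I do not anticipate any real obstacle: the argument is a sequence of elementary manipulations of $\flat$-normalizations and character sums, with the only mildly delicate point being the case split $N\mid a$ versus $N\nmid a$ in identifying $\frac{a+(-a)^\flat_N}{N}$ with $\lceil a/N\rceil$.
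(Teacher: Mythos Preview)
Your proof is correct and follows essentially the same route as the paper: both arguments hinge on the closed form $(a/N)^\flat_{p^n}=\frac{a+(-a)^\flat_N p^n}{N}$, the congruence $(a/N)^\flat_{p^n}\equiv \lceil a/N\rceil\bmod M$ coming from $M\mid (p^n-1)/N$, and the vanishing of $\sum_{d\bmod M}\chi(d)$. The only cosmetic difference is that you transform \eqref{eq:1} into the desired expression while the paper expands the desired expression and matches it against \eqref{eq:1}, and you derive the closed form for $(a/N)^\flat_{p^n}$ from scratch whereas the paper cites it from \cite{Zh22}.
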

	\begin{proof}
		We see that
		\begin{align*}
			\sum_{0\le d<a} \chi(d)\got{h}_\emptyset(d) = -\sum_{0\le d<a} \chi(a) + N\sum_{0\le d<a/N}\chi(Nd).
		\end{align*}
		Now, it is readily checked that $d<a/N$ if and only if $d<\frac{a+(-a)^\flat_N}{N}$. By Proposition 3.2 \textit{op.~cit.}, we have
		\begin{align*}
			\left(\frac{a}{N}\right)^\flat_{p^n} = \frac{a+(-a)^\flat_Np^n}{N}\equiv \frac{a+(-a)^\flat_N}{N} \bmod M.
		\end{align*}
		The result then follows from \eqref{eq:1} and the vanishing $\sum_{d\in \Z/M}\chi(d) = 0$.
	\end{proof}

	\begin{cor}
		Notation as above. Let $q>1$ be a power of $p$ such that $q\equiv 1\bmod MN$. Then
		\begin{align*}
			L_{p,N}(s,\chi\omega)=(1-\chx{N}^{1-s}\chi\omega(N))L_p(s,\chi\omega) = \lim_{n\to\infty}\sum_{1\le m<q^n, p\nmid m}\chx{m}^{-s} \sum_{1\le d<m}\chi(d)\got{h}_{\emptyset}(d).
		\end{align*}
	\end{cor}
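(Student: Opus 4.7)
The plan is to take the Riemann-sum representation of the $p$-adic integral defining $L_{p,N}(s,\chi\omega)$, substitute the explicit period values furnished by the preceding lemma, and check that the unwanted constant term does not contribute in the limit.

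By the standard Amice--Mazur $p$-adic integration, partitioning $\Z_p^\times$ into cosets of $q^n\Z_p$ (of shrinking diameter as $n\to\infty$),
\begin{align*}
L_{p,N}(s,\chi\omega) = \int_{\Z_p^\times}\chx{x}^{-s}\mu_{\chi,N}(x) = \lim_{n\to\infty}\sum_{\substack{1\le m<q^n\\ p\nmid m}}\chx{m}^{-s}\mu_{\chi,N}(m+q^n\Z_p).
\end{align*}
Since $q\equiv 1\bmod MN$, the preceding lemma applies to $q^n$ and gives
\begin{align*}
\mu_{\chi,N}(m+q^n\Z_p) = \sum_{1\le d<m}\chi(d)\got{h}_\emptyset(d) + C,
\end{align*}
where $C:=(1-\chi(N)N)L(0,\chi)$ is independent of $m$ and $n$ (using $\chi(0)=0$). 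Splitting the Riemann sum, the main piece produces precisely the right-hand side of the corollary; the leftover is the product of $C$ with $S_n:=\sum_{1\le m<q^n,\,p\nmid m}\chx{m}^{-s}$.

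Thus what remains is to verify $S_n\to 0$ as $n\to\infty$. Grouping the integers $m\in[1,q^{n+1})$ by their residues $m_0$ modulo $q^n$, one has $\chx{m_0+kq^n}/\chx{m_0} = 1+kq^n/m_0\in 1+q^n\Z_p$, so $\chx{m_0+kq^n}^{-s}\equiv\chx{m_0}^{-s}\bmod q^n$ by binomial expansion in $s\in\Z_p$. Hence $S_{n+1}\equiv qS_n\bmod q^n$, and induction yields $|S_n|_p\to 0$.

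The essential content has really been packaged into the preceding lemma; the corollary is a substitution plus the vanishing of the tail $C\cdot S_n$. The only mildly non-obvious step is this last vanishing, which is the direct analog of the continuity-based cancellation invoked in \cite[Lemma 4.6]{Zh23}.
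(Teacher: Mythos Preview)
Your proof is correct and follows essentially the same approach implicit in the paper: the corollary is not given a separate proof there, being an immediate Riemann-sum substitution of the preceding lemma, with the constant term $(1-\chi(N)N)L(0,\chi)$ discarded by the same continuity principle (\cite[Lemma 4.6]{Zh23}) invoked in the analogous sum-expression corollary of \S\ref{sec:sumexpr}. Your explicit verification that $S_n\to 0$ via the congruence $S_{n+1}\equiv qS_n\bmod q^n$ simply spells out that citation.
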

	
	\subsubsection{A Ferrero--Greenberg type formula}
	
	Assume now $\chi(p)=1$, so $L_p(0,\chi\omega) = L_{p,N}(0,\chi\omega)=0$, and $L_{p,N}'(0,\chi\omega) = (1-N\chi(N))L_p'(0,\chi\omega)$. We are ready to compute the derivative of $L_{p,N}(s,\chi\omega)$; note that in the limits below, $q$ denotes a power of $p$ with $q\equiv 1\bmod MN$.
	\begin{align}\label{eq:FG@N}
		\begin{split}
			L_{p,N}'(0,\chi\omega) &= -\lim_{n\to \infty}\sum_{\substack{0\le m<q^n\\ p\nmid m}}\sum_{0\le d<m}\chi(d)\got{h}_\emptyset(d) \log_p m\\
			&=-\lim_{n\to \infty} \sum_{0\le d<MN}\chi(d)\got{h}_\emptyset(d)\sum_{\substack{0\le m<q^n\\
					p\nmid m, m^\sharp_{MN}>d}} \log_p m\\
			&=- \lim_{n\to \infty} \sum_{0\le d<MN}\chi(d)\got{h}_\emptyset(d)
			\left[\sum_{\substack{0\le m<q^n\\ 					p\nmid m, m^\sharp_{MN}>d}}\log_p(MN) + \sum_{\substack{0\le m<\frac{MN-d}{MN}q^n\\ p\nmid m}}\log_p m\right]\\
			&=-\log_p (MN) L_{p,N}(0,\chi\omega) - \sum_{0\le d<MN}\chi(d)\got{h}_\emptyset(d)\log_p\Gamma_p\left(\frac{d}{MN}\right)\\
			&=\sum_{0\le d<M} \chi(d) \log_p \left(\frac{\prod_{0\le k<N}\Gamma_p\left(\frac{d+kM}{MN}\right)}{\Gamma_p\left(\frac{d+(-d/M)^\flat_N M}{MN}\right)^N}\right).
		\end{split}
	\end{align}
	Here in the third equality we employed the Ferrero--Greenberg permutation from Appendix A, \textit{op.~cit.}, which is a set bijection
	\begin{align*}
		\iota: \Phi_d =\{1\le m<p^n:p\nmid m, m^\sharp_{MN}>d\}\longrightarrow \Psi_d =\left\{1\le m<\frac{MN-d}{MN}p^n:p\nmid m\right\},
	\end{align*}
	and is such that $m\equiv MN\iota(m)\bmod p^n$.
	
	Next, by the original Ferrero--Greenberg formula \cite{FG78} and the relation $L_{p,N}'(0,\chi\omega) = (1-\chi(N)N)L_p'(0,\chi)$, we find
	\begin{align*}
		\sum_{0\le d<M} \chi(d) \log_p \left(\frac{\prod_{0\le k<N}\Gamma_p\left(\frac{d+kM}{MN}\right)}{\Gamma_p\left(\frac{d+(-d/M)^\flat_N M}{MN}\right)^N}\right) = 
		\sum_{0\le d<M} \chi(d) \log_p\left(\frac{\Gamma_p\left(\frac{d}{M}\right)}{\Gamma_p\left(\frac{(d/N)^\flat_M}{M}\right)^N}\right).
	\end{align*}
	As $\frac{d+(-d/M)^\flat_N M}{N} = (d/N)^\flat_M$, we have a simplified formula
	\begin{align}\label{eq:comparison.chi}
		\sum_{0\le d<M} \chi(d) \log_p \prod_{0\le k<N}\Gamma_p\left(\frac{d+kM}{MN}\right) = 
		\sum_{0\le d<M} \chi(d) \log_p\Gamma_p\left(\frac{d}{M}\right).
	\end{align}
	We remark that \eqref{eq:comparison.chi} remains valid if $\chi$ is the trivial character on $(\Z/M)^\times$, because $\Gamma_p(z)\Gamma_p(1-z) = \pm 1$. 
	
	Now, let $\nu$ be the order of $p$ in the multiplicative group $(\Z/M)^\times$. Varying $\chi$ in the group of characters on $(\Z/M)^\times/(p)$, for $0\le d<M$ with $\gcd(d,M)=1$, we find the following product formula
	\begin{align}\label{eq:gamma-distribution}
		\prod_{0\le j<\nu} \prod_{0\le k<N}\left\langle\Gamma_p\left(\frac{(p^jd)^\flat_M+kM}{MN}\right)\right\rangle = 
		\prod_{0\le j<\nu}\left\langle\Gamma_p\left(\frac{(p^jd)^\flat_M}{M}\right)\right\rangle.
	\end{align}

	\subsubsection{Arithmetic interpretation}
	We shall now assume that $p$ is an odd prime. Fix a prime $\got{P}|p$ in $\Q(\mu_M)$. Denote by $\F$ the residue field of $\got{P}$, which has cardinality $q = p^\nu$. Fix a multiplicative lift $t: \F^\times \to \mu_{q-1}\subset \bar{\Q}$ and an additive isomorphism $\psi: \F_p\xrightarrow{\sim} \mu_p\subset\bar{\Q}$. Recall for $h\in \frac{1}{M}\Z/\Z - \{0\}$, the finite field Gauss sum \cite{We49} is defined by (note the minus sign)
	\begin{align*}
		\got{g}(h) = -\sum_{x\in \F^\times} t(x^{(q-1)h})\psi(\tr_{\F/\F_p} x).
	\end{align*}
	Additionally, we will consider Gauss sums for $\Q(\mu_{MN})$. Thus take a prime $\got{P}'|\got{P}$ of $\Q(\mu_{MN})$, and denote its residue field by $\F'$; write $q' = p^{\nu'} = \#\F'$. Fix a multiplicative lift $t':\F'^\times\to \mu_{q'-1}$ such that $t'|_{\F^\times} = t$. Then for $h\in \frac{1}{MN}\Z-\{0\}$, we write
	\begin{align*}
		\got{g}'(h) = - \sum_{x\in \F'^\times} t(x^{(q'-1)h})\psi(\tr_{\F'/\F_p}x).
	\end{align*}
	Note that if $h\in \frac{1}{N}\Z-\{0\}$, then $t(x^{(q'-1)h}) = t(\nm_{\F'/\F}(x)^{(q-1)h})$, so in this case we have the following identity due to Davenport--Hasse (see, e.g., p.~503 \textit{op.~cit.})
	\begin{align}\label{eq:DH-extension}
		\got{g}'(h) = \got{g}(h)^{\nu'/\nu}.
	\end{align} 
	
	We now relate the Morita Gamma values to the Gauss sums. First, we have
	\begin{align}\label{eq:nu'-nu}
		\prod_{0\le j<\nu} \prod_{0\le k<N}\Gamma_p\left(\frac{(p^jd)^\flat_M+kM}{MN}\right)^{\nu'/\nu} =
		\prod_{0\le j<\nu'} \prod_{0\le k<N}\Gamma_p\left(\frac{(p^jd+p^jkM)^\flat_{MN}}{MN}\right).
	\end{align}
	Fix a primitive $p$-th root of unity $\zeta$, and put $\pi = (\zeta-1)\sqrt[p-1]{\frac{-p}{(\zeta-1)^{p-1}}}$. By the Gross--Koblitz formula \cite[Corollary 1.11]{GK79}, we find
	\begin{align}\label{eq:GK1}
		\prod_{0\le j<\nu'} \prod_{0\le k<N}\Gamma_p\left(\frac{(p^jd+p^jkM)^\flat_{MN}}{MN}\right) = 
		\pi^{(\nu'/\nu)\sum_{0\le j<\nu}(p^j d)^\flat_{M}/M + \nu'(N-1)/2}
		\prod_{0\le k<N}\got{g}'\left(\frac{d+kM}{MN}\right)
	\end{align}
	and
	\begin{align}\label{eq:GK2}
		\prod_{0\le j<\nu}\Gamma_p\left(\frac{(p^jd)^\flat_M}{M}\right) = (-p)^{\sum_{0\le j<\nu}(p^j d)^\flat_M/M}
		\got{g}\left(d/M\right).
	\end{align}
	Combining \eqref{eq:gamma-distribution}, \eqref{eq:DH-extension}, \eqref{eq:nu'-nu}, \eqref{eq:GK1} and \eqref{eq:GK2}, we deduce that
	\begin{align*}
		\sum_{0\le k<N} \log_p\got{g}'\left(\frac{d+kM}{MN}\right) =
		\log_p\got{g}'\left(\frac{d}{M}\right).
	\end{align*}
	We have thus recovered a weak form of the Davenport--Hasse relation \cite[Theorem 2.10.1]{Lang}.

	\bibliographystyle{alpha}
	\bibliography{gamma.bib}
\end{document}